\newtheorem{rem}[theorem]{Remark}
\newcommand{\Xitrain}{{\Xi_\mathrm{train}}}
\newcommand{\Xitrainanchor}{{\Xi_\mathrm{train}^\mathrm{anchor}}}
\DeclareMathAlphabet{\mathpzc}{OT1}{pzc}{m}{it}
\newcommand{\X}{{\mathbb{X}}}
\newcommand{\Y}{{\mathbb{Y}}}
\newcommand{\R}{{\mathbb{R}}}
\newcommand{\N}{{\mathbb{N}}}
\newcommand{\E}{\mbox{I\negthinspace E}}
\newcommand{\cD}{{\mathcal{D}}}
\newcommand{\cG}{{\mathcal{G}}}
\newcommand{\cH}{{\mathcal{H}}}
\newcommand{\cN}{{\mathcal{N}}}
\newcommand{\cT}{{\mathcal{T}}}
\newcommand{\cX}{{\mathcal{X}}}
\newcommand{\bx}{\boldsymbol{x}}
\title{A Reduced Basis Method for the Hamilton-Jacobi-Bellman Equation with Application to the European Union Emission Trading Scheme\footnotemark[1]}
\author{Sebastian Steck\footnotemark[3]
	\and
	Karsten Urban\footnotemark[4]}
\begin{document}
\maketitle

\renewcommand{\thefootnote}{\fnsymbol{footnote}}
\footnotetext[3]{University of Ulm, Institute for Numerical Mathematics, Helm\-holtz\-strasse 20, D-89069 Ulm, Germany, {\tt{sebastian.steck@uni-ulm.de}}}
\footnotetext[4]{University of Ulm, Institute for Numerical Mathematics, Helm\-holtz\-strasse 20, D-89069 Ulm, Germany, {\tt{karsten.urban@uni-ulm.de}}}

\footnotetext[1]{This research was supported by Deutsche Forschungsgemeinschaft (DFG) within SPP 1324 ``Mathematical methods for extracting quantifiable information from complex systems''.}
\renewcommand{\thefootnote}{\arabic{footnote}}

\begin{abstract}
This paper draws on two sources of motivation: (1) The European Union Emission Trading Scheme (EU-ETS) aims at limiting the overall emissions of greenhouse gases. The optimal abatement strategy of companies for the use of emission permits can be described as the viscosity solution of a Hamilton-Jacobi-Bellman (HJB) equation. It is a question of general interest, how regulatory constraints can be set within the EU-ETS in order to reach certain political goals such as a good balance of emission reduction and economical growth. Such regulatory constraints can be modeled as parameters within the HJB equation. 

(2) The EU-ETS is just one example where one is interested in solving a parameterized HJB equation often for different values of the parameters (e.g.\ to optimize their values with respect to a given target functional). The Reduced Basis Method (RBM) is by now a well-established numerical method to efficiently solve parameterized partial differential equations. However, to the best of our knowledge, an RBM for the HJB equation is not known so far and of (mathematical) interest by its own, since the HJB equation is of hyperbolic type which is in general a nontrivial task for model reduction.

We analyze and realize a RBM for the HJB equation. In particular, we construct an online-efficient error estimator for this nonlinear problem using the Brezzi-Rapaz-Raviart (RBB) theory as well as numerical algorithms for the involved parameter-dependent constants. Numerical experiments are presented.
\end{abstract}

\begin{keywords}
	Reduced Basis Method, Hamilton-Jacobi-Bellmann equation, emission trading system
\end{keywords}

\begin{AMS}
35F21,  	
65N99,	
91G80  	
\end{AMS}

\date{\today}



\section{Introduction}\label{Sec:1}

A driving source of motivation for the investigations reported in this paper is the European Union Emission Trading Scheme (EU-ETS)  that has been invented in order to limit the emission of greenhouse gases. Within this EU-ETS, a limited amount of emission permits is issued and each pollutant needs to cover its emissions with sufficient permits. Both from the environmental and ecological as well as from the economical point of view it is important to control the EU-ETS in such a way that certain desired political effects are reached. As a simple example, the number of permits should limit the emission of global warming gases without leading to a severe economical crises. Thus, we are interested in investigating the effect of several different regulatory constraints (i.e., a parameter study from a mathematical point of view) as well as trying to find regulatory strategies in order to reach certain goals (i.e., realtime optimal control).
 
From a mathematical point of view, this means that the same model has to be solved for a variety of parameters, here different regulatory constraints. We describe these constraints in terms of parameters $\mu\in\cD$, where $\cD\subset\R^P$ is the set of all possible parameter values. Moreover, a full mathematical model of the EU-ETS is in general complex so that the numerical simulation is costly and parameter studies as well as realtime optimal control is not feasible. Hence, we suggest to use the Reduced Basis Method (RBM), a model reduction technique that uses a possible costly offline phase in order to computationally construct a reduced system, which is then used in the multi-query (parameter study) or realtime (optimal control) context in order to produce numerical approximations for various parameter values highly efficient and with mathematical certification in terms of a posteriori error control.
 
It turns out that an optimal abatement strategy concerning the use of emission permits can be described by the Hamilton-Jacobi-Bellman (HJB) equation, parameterized by the regulatory constraints. This brings us to the second source of motivation for this paper, which is of mathematical nature and independent of the specific application of the EU-ETS. In fact, to the best of our knowledge, an RBM for the HJB equation is not known so far and of interest by its own since the HJB equation is of hyperbolic type which is in general a nontrivial task for model reduction.

It is the aim of this paper to construct, analyze and realize a RBM for the HJB equation with the specific application of the EU-ETS. The remainder of this paper is organized as follows. In Section \ref{Sec:2}, we collect the required preliminaries on the mathematical model for the EU-ETS yielding the HJB equation. Section \ref{Sec:3} is devoted to the discretization including an error analysis. Since we are facing a nonlinear problem (in the specific case of emission trading a quadratic problem, see also Remark \ref{Rem:quad} below), we use the Brezzi-Rappaz-Raviart (BRR) theory in order to ensure well-posedness and error control.  The RBM for the HJB equation is introduced in Section \ref{Sec:4} and finally, in Section \ref{Sec:5} we present results of some numerical experiments.

\section{Preliminaries}\label{Sec:2}
In this section, we collect some preliminaries.

\subsection{Mathematical Model for the European Union Emission Trading System (EU-ETS)}\label{Sec:2.1}
We start by introducing a mathematical model for the European Union Emission Trading System (EU-ETS) and show that the market equilibrium can be described in terms of a Hamilton-Jacobi-Bellman (HJB) equation. First, the emission trading system is organized in trading periods, but for simplicity we may reduce ourselves to one period only. It can be shown under reasonable assumptions, that the market equilibrium for one single trading period $[0,T]$ can be characterized by the fact that the sum of the costs of all market participants is minimal, \cite{CarmonaFehrHinzPorchet}. 

Let $Y_\tau$ be a stochastic process, $\tau\in[0,T]$, which describes the amount of uncovered emissions, sometimes also called \emph{state}. For a set of $d$ companies whose greenhouse gas emissions are considered, the values of $Y_\tau$ are thus taken in $\R^d$. The $\R^d$-valued stochastic control $\pi_\tau$ describes the additional abatement of emissions compared to the so-called \emph{business as usual} strategy. Hence, an \emph{optimal abatement strategy} $\pi=(\pi_\tau)_{\tau\in [0,T]}$ should minimize the \emph{expected abatement costs}, i.e., the cost functional
\begin{equation}\label{eq:J1}
	J(\pi):= 
	\E \left[\int_0^T  f^{\pi}\left(\tau, Y_\tau\right) d\tau + h(Y_T) \right],
\end{equation}
where $f^{\pi}$ denotes the running abatement cost using strategy $\pi$ and the function $h$ models the penalty to be paid at the end of the trading period.

\begin{rem}\label{Rem:quad}
	For later reference, we note that in the specific case of the EU-ETS, the dependency of $f^{\pi}$ with respect to $\pi$ is quadratic.
\end{rem}

A standard stochastic model for the amount of uncovered emissions $Y_\tau$ reads
\begin{equation}\label{eq:SDE1}
	dY_\tau = b^{\pi}(\tau, Y_\tau) d\tau + \sigma^{\pi}(\tau, Y_\tau) dW_\tau,\, \tau\in (0,T], 
	\qquad 
	Y_0 = y_0, 
\end{equation}
where $W_\tau$ is a $d$-dimensional Wiener process and $b^{\pi}$, $\sigma^{\pi}$ are coefficients for drift and volatility, respectively, such that $b^\pi$ is linear w.r.t.\ the control $\pi$ as well as $\sigma^{\pi}(\sigma^{\pi})^T$ is linear in $\pi$.

The transition to a partial differential equation (PDE) in terms of a HJB equation is then done by introducing new variables $(t,x)$ for time and state and to modify \eqref{eq:SDE1} to
\begin{equation}\label{eq:SDE2}
	dY_\tau = b^{\pi}(\tau, Y_\tau) d\tau + \sigma^{\pi}(\tau, Y_\tau) dW_\tau,\, \tau\in (t,T], 
	\qquad 
	Y_t = x, 
\end{equation}
i.e., the initial time $t$ and the initial state $x$ at $\tau=t$ are the new variables. Moreover, the stochastic control $\pi$ is replaced by a determinstic (but state-dependend) function $\gamma: (t,x)\mapsto \R^d$ and accordingly, the cost functional in \eqref{eq:J1} reads
\begin{equation}\label{eq:J2}
	J(t,x; \gamma):= 
	\E \left[\int_t^T  f^{\gamma}\left(\tau, Y_\tau\right) d\tau + h(Y_T) \right],
\end{equation}
where the dependency of $x$ is implicit via $Y_\tau=Y_\tau(t,x)$ by \eqref{eq:SDE2}. 
In order to derive a strategy that yields minimal cost, one needs to solve a stochastic optimization problem whose solution is the \emph{value function}, i.e., for $x\in\R^d$
\begin{equation}\label{Eq:ValueFunction}
	u(t,x) = \inf_{\gamma\in\Gamma} J(t,x; \gamma) \quad\forall t\in[0, T),
	\qquad
	u(T,x) = h(x),
\end{equation}
where $\Gamma\subset L_\infty([0,T]\times\R^d; \R^d)$ is a suitable set of admissible controls. 
It is well-known that the value function is a viscosity solution of the Hamilton-Jacobi-Bellman (HJB) equation (see e.g.\ \cite[Theorem IV.5.2]{YongZhou})
\begin{equation}
	\label{Eq:HJB1}
	\partial_t u(t,x) + \sup_{\gamma\in\Gamma} \Big\{ \frac12 \text{tr}(\sigma^{\gamma}(\sigma^{\gamma})^T\, \nabla^2u(t,x)) 
		+ b^{\gamma}\cdot\nabla u(t,x) 
		- f^{\gamma}(t,x)\Big\}=0,
\end{equation}
for all $(t,x)\in [0,T]\times\Omega$, where $\nabla^2u$ denotes the Hessian of $u$. 
We consider a bounded domain $\Omega\subset\R^d$ for the state. The reason is twofold: (1) The limit of the price for emissions exceeding the available permits is the penalty set by the regulating authorities; (2) If, on the other hand, there is a vast excess of permits, no emissions will be saved and the value of the permits is only determined by their terminal value at $T$. Of course, using a bounded domain $\Omega\subset\R^d$ implies the need to set appropriate boundary conditions. Since the price for the permits is determined by the derivative of the value function $u$ w.r.t.\ the need for permits $x$, a corresponding Neumann boundary conditions are appropriate, see \eqref{Eq:HJB:2} below.

Last, but not least, we model the appearance of parameters $\mu\in\cD\subset\R^P$ as already introduced in Section \ref{Sec:1}. This means that basically all quantities may be $\mu$-dependent, e.g.\ $f^\gamma(\mu)$, $b^\gamma(\mu)$, $\sigma^\gamma(\mu)$, $J(\mu; t,x; \gamma)$ and the value function $u(\mu)=u(\mu; t,x)$.

\subsection{Hamilton-Jacobi-Bellman (HJB) Equation}\label{Sec:2.2}

Let $T>0$ be some final time, $I:=[0,T]$ the time interval, $\Omega\subset\R^d$ be a bounded domain and denote by $\Omega_T:=I\times\Omega$ the time-space domain. The parameters are denoted by $\mu\in\cD$, where $\cD\subset\R^P$ is the parameter space.

Next, let $\Gamma\subset L_\infty(\Omega_T; \R^d)$ be a separable, complete metric space, the space of admissible controls, such that the mapping 
$$
	\Gamma\to C(\bar\Omega)\times C(\bar\Omega, \R^d)\times C(\bar\Omega)\times C(\bar\Omega), \quad
	\Gamma\ni \gamma\mapsto (a^\gamma(\mu), b^\gamma(\mu), c^\gamma(\mu), f^\gamma(\mu))
	$$ 
is continuous for any parameter $\mu\in\cD$. We define the linear parameterized PDE (PPDE) operator in space by 
$$
	u\mapsto A^\gamma(\mu; u) := -a^\gamma(\mu)\, \Delta u + b^\gamma(\mu)\cdot \nabla u + c^\gamma(\mu)u, 
	\quad\mu\in\cD.
$$
In the application of the EU-ETS described in \S\ref{Sec:2.1} above, this operator takes the form $A^\gamma(\mu; u):=\frac12 \text{tr}(\sigma^\gamma(\mu)(\sigma^\gamma(\mu))^T\, \nabla^2u) + b^\gamma(\mu)\cdot\nabla u$. Note, that $A^\gamma(\mu; u)$ is linear in $u$, whereas the dependency on the control $\gamma$ is in general not linear. Finally, we define the nonlinear parameterized Hamilton-type operator by 
\begin{equation}
	\label{Eq:DefH}
	\cH(\mu; u):= \sup_{\gamma\in\Gamma} \{A^\gamma(\mu; u) - f^\gamma(\mu)\},
\end{equation}
as well as the linear space-time differential operator $L^\gamma(\mu; u):= \partial_t u + A^\gamma(\mu; u)$, which is linear in $u$, but not self-adjoint. The range of this operator should also contain boundary and terminal conditions included in the following set of Hamilton-Jacobi-Bellman (HJB) equations (compare \eqref{Eq:HJB1})
\begin{subequations}
	\label{Eq:HJB}
	\begin{align}
		\partial_t u + \cH(\mu; u) &= 0, && \hspace*{-20mm} \text{in } \Omega_T, \label{Eq:HJB:1} \\
		\frac\partial{\partial n} u &= \psi, && \hspace*{-20mm}\text{on } \partial\Omega_T=(0,T)\times\partial\Omega, \label{Eq:HJB:2} \\
		u(T) &= u_T, && \hspace*{-20mm}\text{on } \bar\Omega, \label{Eq:HJB:3}
	\end{align}
\end{subequations}
where $\psi$ is a suitable function modeling the Neumann truncation boundary conditions and $u_T\in H^1(\Omega)$ denotes the terminal condition (which, in particular needs to be compatible with $\psi$ on $\{T\}\times\partial\Omega$). For the correct interpretation of the subsequent discussion, it is worthwhile to detail \eqref{Eq:HJB:1} in combination with \eqref{Eq:DefH}:
\begin{equation}
	\label{Eq:HJB:1a}
	\partial_t u(t,x) + \sup_{\gamma(t,x)\in\R^d} \{ A^{\gamma(t,x)}(\mu; u(t,x)) - f^{\gamma(t,x)}(\mu) \}=0, \,\,\, \forall (t,x)\in\Omega_T.
\end{equation}

This shows that the control space is $\Gamma:= L_\infty(\Omega_T; \R^d)$, or an appropriate subspace of the latter. 
We can also write \eqref{Eq:HJB} in the following form
\begin{equation}
	\label{Eq:HJBa}
	\sup_{\gamma\in \Gamma} \{ L^\gamma(\mu; u) - g^\gamma(\mu)\}=0\quad \text{on } \Omega_T.
\end{equation}

Given a value function, i.e., a solution $u^\ast\in U$ (where $U$ is an appropriate solution space, e.g.\ $H^1(I; H^{-1}(\Omega))\cap L_2(I; H^1_0(\Omega))$) of \eqref{Eq:HJB}, the corresponding optimal control $\gamma^\ast\in\Gamma$ is given by
$$
	\gamma^\ast = \arg \sup_{\gamma\in\Gamma} \{ L^\gamma(\mu; u^\ast) - g^\gamma(\mu)\}.
$$
The pair of optimal value function and optimal control is also written as $x^\ast=(\gamma^\ast, u^\ast)\in\X:=\Gamma\times U$.

\subsubsection*{Well-posedness}
It is well-known that well-posedness of the HJB-equation \eqref{Eq:HJB} is ensured under reasonable assumptions. In fact, if $A^\gamma$, $f^\gamma$ and $u_T$ are uniformly continuous and uniformly Lipschitz continuous with respect to the state $x\in\R^d$ as well as bounded at the state $x=0$, it was proven e.g.\ in \cite[Theorem IV.6.1]{YongZhou} that the HJB equation \eqref{Eq:HJB} admits at most one viscosity solution such that there exists constant $K\in\R$ with
\begin{subequations} \label{Equ:HJBlsg-schranken}
\begin{eqnarray}
	| u(t,x) | &\leqslant& K(1+|x|) \\
	| u(t,x) - u(\hat t, \hat x) | &\leqslant& K \big( |x-\hat x| + (1+\min\{|x|,|\hat x|\}) {|t-\hat t|}^{1/2} \big)
\end{eqnarray}
\end{subequations}
for all $(t,x),(\hat t,\hat x)\in\Omega_T$.

Furthermore, the value function $u$ in \eqref{Eq:ValueFunction} is a viscosity solution of the HJB equation \eqref{Eq:HJB}, which satisfies \eqref{Equ:HJBlsg-schranken} \cite[Theorem IV.5.2 \& Proposition IV.3.1]{YongZhou}. Thus, the value function  \eqref{Eq:ValueFunction} is the unique solution of the HJB equation.

\begin{rem}
	It should be noted that we do not have a linear-quadratic problem even though $f^\gamma$ is quadratic in $\gamma$, the HJB remains nonlinear and we cannot expect the availability of a solution formula or even a smooth solution, \cite{YongZhou}.
\end{rem}

\section{Discretization}\label{Sec:3}
We now describe the essentials of a numerical discretization. We start by a possibly high-dimensional model that is assumed to reflect the true model sufficiently well, thus called `truth' discretization. This will later be the basis for model reduction.

\newcommand{\myGamma}{{\R^{d}}}
\newcommand{\myGammaN}{{\R^{d\cN}}}

\subsection{`Truth' discretization}
Recall from \eqref{Eq:HJB:1a} that the supremum is taken pointwise. This also motivates that most discretizations are pointwise. In order to describe such schemes, let $z_i:=(t_i,x_i) \in\Omega_T$, $i=1,\ldots,\cN$, be a set of points in the time-space domain, where $\cN\gg 1$ is assumed to be `large', in particular large enough to represent the main characteristics of the continuous problem \eqref{Eq:HJBa} as well as `too large' for multi-query or realtime simulations. Hence, we are looking for a discrete approximation $u^{\ast,\cN}(\mu) = (u_i^{\ast,\cN}(\mu))_{i=1,\ldots, \cN}\in\R^\cN$ of the value function $u^\ast\in U$ at the points $z_i=(t_i,x_i)$. This means that a pointwise discretization of \eqref{Eq:HJBa} amounts solving an optimization problem for \emph{each} point, i.e., we need to determine a component of an approximation to $\gamma^*\in\Gamma$ for each $i$. We denote such an approximation of $\gamma^\ast(t_i,x_i)$ by $\gamma_i^{\ast, \cN}(\mu)$. If we abbreviate the pointwise evaluation of the operator and right-hand side, respectively, as
\begin{align*}
	& L^{\cN,\gamma_i}(\mu;\cdot): \R^\cN\to\R,
	&& L^{\cN,\gamma_i}(\mu;\cdot) := [L^{\gamma(t_i,x_i)}(\mu;\cdot)](t_i,x_i), \\
	& f^{\cN,\gamma_i}(\mu) \in\R,
	&& f^{\cN,\gamma_i}(\mu) := [f^{\gamma(t_i,x_i)}(\mu)](t_i,x_i),
\end{align*}
we obtain the discretized optimization problem of dimension $\cN$:
\begin{equation}
	\label{Eq:HJB:cN}
	\text{Find } u^\cN(\mu)\in\R^\cN: \quad
	\max_{\gamma_i \in \myGamma} \{ L^{\cN,\gamma_i}(\mu; u^\cN(\mu)) - f^{\cN,\gamma_i}(\mu) \}=0.
	\quad \forall 1\le i\le\cN.
\end{equation}
The corresponding solution is denoted by $u^{\ast,\cN}(\mu) = (u_i^{\ast,\cN}(\mu))_{i=1,\ldots, \cN}\in\R^\cN$, which is a discrete approximation  of the value function $u^\ast\in U$. 
For the optimal control, we set (with the solution $u^{\ast, \cN}(\mu)$ of \eqref{Eq:HJB:cN}) $\gamma^{\ast, \cN}(\mu) = (\gamma_i^{\ast, \cN}(\mu))_{i=1,\ldots, \cN}\in\myGammaN$ defined for each $1\le i\le\cN$ by
\begin{equation}
	\label{Eq:HJB:ucN}
		\myGamma\ni \gamma_i^{\ast, \cN}(\mu) := \arg \max_{\gamma_i \in \myGamma} \{ L^{\cN,\gamma_i}(\mu; u^{\ast,\cN}(\mu)) - f^{\cN,\gamma_i}(\mu) \},
		\quad 1\le i\le\cN.
\end{equation}

\begin{rem}\label{Rem:Gamma}
	The above described model yields $\Gamma=L_\infty(\Omega_T; \R^d)$ or an appropriate subset. Such a subset would occur if control constraints would appear. In that case, $\R^d$ would be replaced by some $\Sigma\subset\R^d$. Correspondingly, $\R^{d\cN}$ in the discretization would have to be replaced by $\Sigma^\cN$. We emphasize that all subsequent findings remain true in this case.
\end{rem}

%
In order to simply notation, we collect all single optimization problems into one system by setting for $\gamma^\cN\in\myGammaN$, $u^\cN\in\R^\cN$
$$
	L^{\cN,\gamma^\cN}(\mu; u^\cN) := (L^{\cN,\gamma^\cN_i}(\mu; u^{\cN}))_{1\le i\le\cN}, \quad
	f^{\cN, \gamma^\cN}(\mu) := (f^{\cN,\gamma^\cN_i}(\mu))_{1\le i\le\cN},
$$
which means that $L^{\cN, \cdot}(\mu; \cdot): (\myGamma\times\R)^\cN \to\R^\cN$ and $f^{\cN, \cdot}(\mu):\myGammaN\to\R^\cN$. Hence, for the pair 
$$
	(\gamma^{\ast,\cN}(\mu),u^{\ast,\cN}(\mu)) \in\X^\cN := (\myGamma\times\R)^\cN = \R^{(d+1)\cN}
$$ 
of optimal (discrete) control and optimal (discrete) value function, we get that 
$$
	L^{\cN, \gamma^{\ast,\cN}(\mu)}(\mu; u^{\ast,\cN}(\mu)) - f^{\cN, \gamma^{\ast,\cN}(\mu)}(\mu)=0 \quad\text{ in } \R^\cN. 
$$

\subsection{A nonlinear system for control and state}
If the function $\gamma^\cN\mapsto L^{\cN, \gamma^\cN}(\mu, u^{\ast,\cN}(\mu)) - f^{\cN, \gamma^\cN}(\mu)$ is in $C^1(\myGamma; \R)^\cN$, we can consider the Fr\'echet derivative w.r.t.\ the control variable $\gamma$ at some $\delta^\cN\in\myGammaN$
$$
	\partial_\gamma [L^{\cN, \delta^\cN}(\mu; u^{\ast,\cN}(\mu))] \in L(\myGamma, \R)^\cN,
			\quad
	\partial_\gamma [f^{\cN, \delta^\cN}(\mu)] \in L(\myGamma, \R)^\cN.\footnotemark[1]
$$
\footnotetext[1]{We denote by $L(X,Y)$ the space of continuous, linear mappings from a normed space $X$ to a normed space $Y$.}%
Recall that in the case of the EU-ETS, $f^\gamma$ is a quadratic function of the control $\gamma$ so that the assumed differentiability in fact holds. 
Then, the optimal control $\gamma^{\ast,\cN}(\mu)\in\myGammaN$ is a critical point of this mapping, i.e.
\begin{equation}
	\label{Eq:Abl:u}
	\partial_\gamma [L^{\cN, \gamma^{\ast,\cN}(\mu)}(\mu; u^{\ast,\cN}(\mu))
	- f^{\cN, \gamma^{\ast,\cN}(\mu)}(\mu)](\delta^\cN) = 0,
	\quad \forall \delta^\cN\in\myGammaN,
\end{equation}
which is a \emph{nonlinear} problem for $\gamma^{\ast,\cN}(\mu)$ (even though \eqref{Eq:Abl:u} is linear in the `test function' $\delta^\cN$). 
We define the composite function $\cG^\cN(\mu): \X^\cN:=(\myGamma\times\R)^\cN \to L(\myGamma, \R)^\cN\times\R^\cN=:\Y^\cN$ as $(x^\cN=(\gamma^\cN, u^\cN)\in\X^\cN)$
\begin{equation}\label{Eq:def:G}
	\cG^\cN(\mu)(x^\cN) := 
			\begin{pmatrix} 
				\partial_\gamma [L^{\cN, \gamma^\cN}(\mu; u^\cN) - f^{\cN, \gamma^\cN}(\mu)] \\[1mm]
				L^{\cN,\gamma^\cN}(\mu; u^\cN) - f^{\cN,\gamma^\cN}(\mu) 
			\end{pmatrix}
			=:
			\begin{pmatrix} 
				\cG_1^\cN(\mu) (x^\cN) \\[2mm] 
				\cG_2^\cN(\mu) (x^\cN) 	
			\end{pmatrix}.
\end{equation}
In this notation, the `truth' control/state-solution $x^{\cN,\ast}(\mu):=(\gamma^{\ast,\cN}(\mu), u^{\ast,\cN}(\mu))\in\X^\cN$ is characterized by
\begin{equation}\label{Eq:truthproblem}
	\cG^\cN(\mu) (x^{\cN,\ast}(\mu)) =
	\cG^\cN(\mu) (\gamma^{\ast,\cN}(\mu), u^{\ast,\cN}(\mu)) = 0 \quad\text{in } \Y^\cN,
\end{equation}
i.e., this equation is to be understood in $L(\myGamma,\R)^\cN\times\R^\cN = \Y^\cN$.

For later purpose, we determine the Fr\'echet derivatives of $\cG^\cN(\mu)$, in case of their existence (which is obviously guaranteed for the EU-ETS case), of course. Let $x^\cN=(\gamma^\cN,u^\cN)\in\X^\cN$. Then, we have $D(\cG^\cN(\mu))(x^\cN)\in L(\X^\cN, \Y^\cN)$, i.e.,  $(D(\cG^\cN(\mu))(x^\cN))(\tilde x^\cN)\in \Y^\cN$ for $\tilde x^\cN=(\tilde\gamma^\cN,\tilde u^\cN)\in\X^\cN$ and get
\begin{align}
	(D(\cG^\cN(\mu))(x^\cN))(\tilde x^\cN)
	&=
	\begin{pmatrix}
		\partial_\gamma(\cG_1^\cN(\mu)(x^\cN))(\tilde\gamma^\cN) 
				+ \partial_u(\cG_1^\cN(\mu)(x^\cN))(\tilde{u}^\cN) \\[1mm]
		\partial_\gamma(\cG_2^\cN(\mu)(x^\cN))(\tilde\gamma^\cN) 
				+ \partial_u(\cG_2^\cN(\mu)(x^\cN))(\tilde{u}^\cN) 
	\end{pmatrix}
	\notag
	\\
	&\kern-100pt=
	\begin{pmatrix}
		\partial_\gamma(\partial_\gamma [L^{\cN, \gamma^\cN}\kern-3pt(\mu; u^\cN) 
			\kern-2pt-\kern-2pt f^{\cN, \gamma^\cN}(\mu)] )(\tilde\gamma^\cN) 
				+ \partial_u(\partial_\gamma [L^{\cN, \gamma^\cN}\kern-3pt(\mu; u^\cN) 
				\kern-2pt-\kern-2pt f^{\cN, \gamma^\cN}(\mu)] )(\tilde{u}^\cN) \\[1mm]
		\partial_\gamma(L^{\cN,\gamma^\cN}(\mu; u^\cN) - f^{\cN,\gamma^\cN}(\mu))(\tilde\gamma^\cN) 
				+ \partial_u(L^{\cN,\gamma^\cN}(\mu; u^\cN) - f^{\cN,\gamma^\cN}(\mu))(\tilde{u}^\cN) 
	\end{pmatrix}
	\notag
	\\
	&\kern-2pt=
	\begin{pmatrix}
		\partial_\gamma^2 [ L^{\cN, \gamma^\cN}(\mu; u^\cN) -f^{\cN, \gamma^\cN}(\mu)](\tilde\gamma^\cN) 
			+ \partial_\gamma [L^{\cN, \gamma^\cN}(\mu; \tilde u^\cN)] \\[1mm]
		\partial_\gamma [ L^{\cN, \gamma^\cN}(\mu; u^\cN) -f^{\cN, \gamma^\cN}(\mu)](\tilde\gamma^\cN)
			+ L^{\cN, \gamma^\cN}(\mu; \tilde u^\cN)
	\end{pmatrix}. 
	\label{eq:DG}
\end{align}

\begin{rem}\label{Rem:2.2}
One possible numerical method to determine a solution of \eqref{Eq:HJB:cN} is the so-called \emph{policy iteration algorithm}, also called \emph{Howard's algorithm}, \cite{MR2551155}, which reads as follows for an initial value $u^{(0)}\in\R^\cN$: For $k=0,1,2,\ldots$ do
\begin{subequations}
	\label{Eq:Howard}
	\begin{align}
		\myGammaN\ni \gamma^{(k+1)} 
			&= \arg \max_{\gamma \in \myGammaN} \{ L^{\cN,\gamma}(\mu; u^{(k)}) - f^{\cN,\gamma}(\mu)\}, \label{Eq:Howard:1} \\
		\text{find } u^{(k+1)}\in\R^\cN:\quad &   L^{\cN, \gamma^{(k+1)}}(\mu; u^{(k+1)}) = f^{\cN, \gamma^{(k+1)}}(\mu). \label{Eq:Howard:2} 
	\end{align}
\end{subequations}
Under appropriate conditions, this algorithm converges and for the limits, we have $u^{(k)}\to u^{\ast,\cN}(\mu)$ (the solution of \eqref{Eq:HJB:cN}) $\gamma^{(k)} \to \gamma^{\ast,\cN}(\mu)$ as $k\to\infty$ (defined in \eqref{Eq:HJB:ucN}). 
\end{rem}

For later use in deriving error estimates, we collect conditions that ensure Lip\-schitz continuity of the Fr\'echet derivative $D\cG^\cN(\mu)$.

\begin{lemma}
\label{Lemma:Lipschitz}
If the estimates
\begin{subequations}
	\begin{align}
		\| L^{\cN, \gamma_1^\cN} - L^{\cN, \gamma_2^\cN}\|_{L(\R^\cN,\R^\cN)}
			&\le \varrho^L_0 \| \gamma_1^\cN - \gamma_2^\cN\|_{\myGammaN}, 
				\label{eq:L0} \\
		\| \partial_\gamma(L^{\cN, \gamma_1^\cN}u^\cN_1 
			- L^{\cN, \gamma_2^\cN} u^\cN_2)\|_{L(\myGammaN,\R^\cN)}
			&\le \varrho^L_1 \| x^\cN_1 - x^\cN_2\|_{\X^\cN}, 
				\label{eq:L1}\\		
		\| \partial_\gamma^2(L^{\cN, \gamma^\cN_1}u_1^\cN 
			- L^{\cN, \gamma^\cN_2} u_2^\cN)\|_{L(\myGammaN,L(\myGammaN,\R^\cN))}
			&\le \varrho^L_2 \| x^\cN_1 - x^\cN_2\|_{\X^\cN}, 	
				\label{eq:L2}
	\end{align}
\end{subequations}
hold for constants $\varrho^L_k<\infty$, $k=0,1,2$, $x_i=(\gamma_i, u_i)\in\X^\cN$, $i=1,2$ and 
\begin{subequations}
	\begin{align}
		\| \partial_\gamma (f^{\cN, \gamma^\cN_1}-f^{\cN, \gamma^\cN_2})\|_{L(\myGammaN,\R^\cN)}
			&\le \varrho^f_1 \| \gamma_1^\cN-\gamma_2^\cN\|_{\myGammaN}, 
				\label{eq:f1}\\
		\| \partial_\gamma^2 (f^{\cN,\gamma^\cN_1}-f^{\cN, \gamma^\cN_2})\|_{L(\myGammaN,L(\myGammaN,\R^\cN))}
			&\le \varrho^f_2 \| \gamma^\cN_1-\gamma^\cN_2\|_{\myGammaN},
				\label{eq:f2}
	\end{align}
\end{subequations}
	for constants $\varrho^f_k<\infty$, $k=1,2$, then $D(\cG^\cN(\mu))$ is Lipschitz continuous, i.e.,
	\begin{equation}\label{eq:DGLip}
		\| D(\cG^\cN(\mu))(x^\cN_1) - D(\cG^\cN(\mu))(x^\cN_2)\|_{L(\X^\cN,\Y^\cN)}
			\le \varrho \| x^\cN_1-x^\cN_2\|_\X
	\end{equation}
	with $\varrho = \varrho^L_0 + 2\varrho^L_1 + \varrho^L_2 + \varrho^f_1 + \varrho^f_2$. 
\end{lemma}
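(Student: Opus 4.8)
The plan is to work directly from the closed-form expression \eqref{eq:DG} for the Fr\'echet derivative, which was already computed just above the lemma, so that no further differentiation is required. Fix $x_1^\cN=(\gamma_1^\cN,u_1^\cN)$ and $x_2^\cN=(\gamma_2^\cN,u_2^\cN)$ in $\X^\cN$ and estimate the operator norm by testing against an arbitrary direction $\tilde x^\cN=(\tilde\gamma^\cN,\tilde u^\cN)$ with $\|\tilde x^\cN\|_{\X^\cN}\le 1$; in particular $\|\tilde\gamma^\cN\|_{\myGammaN}\le 1$ and $\|\tilde u^\cN\|_{\R^\cN}\le 1$. Writing the difference $(D(\cG^\cN(\mu))(x_1^\cN)-D(\cG^\cN(\mu))(x_2^\cN))(\tilde x^\cN)$ row by row via \eqref{eq:DG} and using the linearity of $\partial_\gamma$, of $\partial_\gamma^2$, and of $L^{\cN,\cdot}(\mu;\cdot)$ in the state slot, the difference decomposes into exactly five elementary differences, one for each of the hypotheses \eqref{eq:L0}--\eqref{eq:f2}.

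Concretely, the first row (valued in $L(\myGamma,\R)^\cN$) produces three summands. Two of them arise from evaluating second derivatives at $\tilde\gamma^\cN$: the term $\partial_\gamma^2[L^{\cN,\gamma_1^\cN}(\mu;u_1^\cN)-L^{\cN,\gamma_2^\cN}(\mu;u_2^\cN)](\tilde\gamma^\cN)$, bounded by \eqref{eq:L2}, and the corresponding $f$-difference, bounded by \eqref{eq:f2}; here $\|\tilde\gamma^\cN\|_{\myGammaN}\le 1$ only contributes a harmless factor $\le 1$. The third summand is $\partial_\gamma[L^{\cN,\gamma_1^\cN}(\mu;\tilde u^\cN)-L^{\cN,\gamma_2^\cN}(\mu;\tilde u^\cN)]$, which still carries an open control slot and hence lies in $L(\myGamma,\R)^\cN$; I would bound it by invoking \eqref{eq:L1} with both state arguments set equal to $\tilde u^\cN$, so that its right-hand side collapses to $\varrho^L_1\|(\gamma_1^\cN,\tilde u^\cN)-(\gamma_2^\cN,\tilde u^\cN)\|_{\X^\cN}=\varrho^L_1\|\gamma_1^\cN-\gamma_2^\cN\|_{\myGammaN}$. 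The second row (valued in $\R^\cN$) likewise produces three summands: $\partial_\gamma[L^{\cN,\gamma_1^\cN}(\mu;u_1^\cN)-L^{\cN,\gamma_2^\cN}(\mu;u_2^\cN)](\tilde\gamma^\cN)$, bounded by \eqref{eq:L1}; the matching $f$-term, bounded by \eqref{eq:f1}; and $L^{\cN,\gamma_1^\cN}(\mu;\tilde u^\cN)-L^{\cN,\gamma_2^\cN}(\mu;\tilde u^\cN)$, bounded via the operator-norm estimate \eqref{eq:L0} together with $\|\tilde u^\cN\|_{\R^\cN}\le 1$. In each control-only estimate I would then use $\|\gamma_1^\cN-\gamma_2^\cN\|_{\myGammaN}\le\|x_1^\cN-x_2^\cN\|_{\X^\cN}$ to pass to the full $\X^\cN$-distance.

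Finally I would combine the five estimates by the triangle inequality. Since the product norm on $\Y^\cN=L(\myGamma,\R)^\cN\times\R^\cN$ is dominated by the sum of its two component norms, the contributions of the two rows add, yielding $\varrho^L_2+\varrho^f_2+\varrho^L_1$ from the first row and $\varrho^L_1+\varrho^f_1+\varrho^L_0$ from the second, i.e.\ exactly $\varrho=\varrho^L_0+2\varrho^L_1+\varrho^L_2+\varrho^f_1+\varrho^f_2$ times $\|x_1^\cN-x_2^\cN\|_{\X^\cN}$, uniformly in $\tilde x^\cN$. Taking the supremum over $\|\tilde x^\cN\|_{\X^\cN}\le 1$ then gives precisely \eqref{eq:DGLip}.

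The computation is routine once \eqref{eq:DG} is in hand; the only genuine effort is bookkeeping. The point to get right is that $\varrho^L_1$ must appear \emph{twice}: the mixed term in the first row and the first term in the second row both invoke \eqref{eq:L1}, the former with the two states coincident and equal to the test direction $\tilde u^\cN$, the latter with the genuine states $u_1^\cN,u_2^\cN$. Matching each of the five summands of \eqref{eq:DG} to the correct hypothesis, tracking which space ($L(\myGamma,\R)^\cN$ or $\R^\cN$) each summand inhabits, and verifying that testing against a unit direction contributes only a factor $\le 1$, are the places where a slip would most easily occur.
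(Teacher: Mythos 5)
Your proof is correct and takes essentially the same route as the paper's: the paper likewise works directly from the representation \eqref{eq:DG} with repeated triangle inequalities, and the single step it highlights explicitly --- specializing \eqref{eq:L1} to coincident states $u_1^\cN=u_2^\cN$ in order to control the mixed term $\partial_\gamma[L^{\cN,\gamma^\cN}(\mu;\tilde u^\cN)]$ --- is precisely your treatment of that term. Your five-term bookkeeping, including the double appearance of $\varrho^L_1$ (once for the mixed term in the first row, once for the genuine-state term in the second row), reproduces exactly the paper's constant $\varrho = \varrho^L_0 + 2\varrho^L_1 + \varrho^L_2 + \varrho^f_1 + \varrho^f_2$.
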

\begin{proof}
	The proof is more or less standard using the representation \eqref{eq:DG} of $D(\cG^\cN(\mu))$ and triangle inequalities several times. It only remains to note that using $u^\cN=u_1^\cN=u_2^\cN$ in \eqref{eq:L1} implies 
	$
		\| \partial_\gamma(L^{\cN, \gamma_1^\cN}u^\cN 
			- L^{\cN, \gamma_2^\cN} u^\cN)\|_{L(\myGammaN,\R^\cN)}
			\le \varrho^L_1 \| \gamma^\cN_1 - \gamma^\cN_2\|_{\X^\cN}
			$,
	which results in the estimate $\| \partial_\gamma(L^{\cN, \gamma_1^\cN}
			- L^{\cN, \gamma_2^\cN})\|_{L(\R^\cN,L(\myGammaN,\R^\cN))}
			\le \varrho^L_1 \| \gamma^\cN_1 - \gamma^\cN_2\|_{\X^\cN}$,
		which is needed in the estimate \eqref{eq:DGLip}.
\end{proof}

\begin{rem}
	Recall that all assumptions in Lemma \ref{Lemma:Lipschitz} are satisfied for the EU-ETS. In fact, the operator $L^\gamma$ is linear in $\gamma$ and is a quadratic function of $\gamma$.
\end{rem}

\subsection{Well-posedness and error control}
We now present some well-known ingredients of the theory developed by Brezzi, Rappaz and Raviart, known as \emph{BRR theory}, \cite{BRR1,BRR2,BRR3}, which provides us with results concerning existence and uniqueness as well as error control for the nonlinear system \eqref{Eq:truthproblem}. In order to streamline the notation, we consider a general generic framework. To this end, let $X$, $Y$ be two finite-dimensional spaces normed by $\|\cdot\|_X$ and $\|\cdot\|_Y$, respectively. We consider a nonlinear mapping $G:X\to Y$ and seek for a solution $x^\ast\in X$ of the problem $G(x)=0$ in $Y$ (i.e., a general form of \eqref{Eq:truthproblem}). 

Next, we assume that the Fr\'echet derivative $DG(x)\in L(X,Y)$ exists for all $x\in X$ and that the inverse also exists. Then, we define for some fixed $\bar{x}\in X$ the mapping
\begin{equation}\label{eq:DefH}
	H_{\bar{x}}: X\to X,
	\qquad
	H_{\bar{x}} (x) := x - (DG(\bar{x}))^{-1} G(x),
\end{equation}
which is obviously of quasi-Newton type. Finally, we set
\begin{equation}\label{eq:Defbeta}
	\beta_{\bar{x}} := \| (DG(\bar{x}))^{-1}\|^{-1}_{L(Y,X)}
\end{equation}
and assume that $0<\beta_{\bar{x}}<\infty$.

\begin{lemma}\label{Lem:2.3}
	Let the Fr\'echet derivative $DG(x)\in L(X,Y)$ exist for all $x\in X$, be invertible and Lipschitz continuous, i.e., there exists a constant $\varrho >0$ such that
	\begin{equation}\label{eq:ass:DG}
		\| DG(x_1) - DG(x_2)\|_{L(X,Y)} \le \varrho \| x_1-x_2\|_X
	\end{equation}
	for all $x_1, x_2\in X$
	Then, for any $x_1, x_2\in X$, we have
\begin{align*}
	\| H_{\bar{x}}(x_1) &- H_{\bar{x}}(x_2)\|_{X}
	\le  
 	\frac{\varrho}{\beta_{\bar{x}}} \, \|x_1-x_2\|_{X}
	\int_0^1  \| {\bar{x}} - (x_2+t(x_1-x_2))\|_{X}\, dt.
\end{align*}
\end{lemma}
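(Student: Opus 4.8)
The plan is to reduce the estimate to the fundamental theorem of calculus for the Fréchet derivative, used together with the Lipschitz hypothesis \eqref{eq:ass:DG}. First I would invoke the definition \eqref{eq:DefH} of $H_{\bar{x}}$ to write
\[
	H_{\bar{x}}(x_1) - H_{\bar{x}}(x_2)
	= (x_1 - x_2) - (DG(\bar{x}))^{-1}\big(G(x_1) - G(x_2)\big),
\]
and then factor out the fixed inverse derivative. Using that $x_1 - x_2 = (DG(\bar{x}))^{-1}\,DG(\bar{x})(x_1-x_2)$, this becomes
\[
	H_{\bar{x}}(x_1) - H_{\bar{x}}(x_2)
	= (DG(\bar{x}))^{-1}\big[\,DG(\bar{x})(x_1-x_2) - \big(G(x_1)-G(x_2)\big)\big].
\]

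Next I would represent the increment $G(x_1)-G(x_2)$ by integrating $DG$ along the straight segment $x_2 + t(x_1-x_2)$, $t\in[0,1]$. Since $DG$ is Lipschitz (hence continuous) and $X,Y$ are finite-dimensional, $G$ is $C^1$ and the fundamental theorem of calculus gives $G(x_1)-G(x_2) = \int_0^1 DG(x_2+t(x_1-x_2))(x_1-x_2)\,dt$. Writing also the trivial identity $DG(\bar{x})(x_1-x_2) = \int_0^1 DG(\bar{x})(x_1-x_2)\,dt$ and subtracting the two integrals turns the bracketed term into the single integral $\int_0^1 [DG(\bar{x}) - DG(x_2+t(x_1-x_2))](x_1-x_2)\,dt$, whose integrand is controlled pointwise by \eqref{eq:ass:DG}.

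It then remains to take norms, pull the operator norm $\|(DG(\bar{x}))^{-1}\|_{L(Y,X)} = \beta_{\bar{x}}^{-1}$ (by \eqref{eq:Defbeta}) out in front, move the norm inside the integral, and apply \eqref{eq:ass:DG} under the integral sign to replace $\|DG(\bar{x}) - DG(x_2+t(x_1-x_2))\|_{L(X,Y)}$ by $\varrho\,\|\bar{x} - (x_2+t(x_1-x_2))\|_X$; the constant factor $\|x_1-x_2\|_X$ extracts from the integral, reproducing exactly the claimed inequality. The only step needing any care — and the closest thing to an obstacle — is the integral representation of $G(x_1)-G(x_2)$ in the vector-valued setting, but this is immediate here since the finite dimensionality of $X,Y$ and the continuity of $DG$ along the segment make the mean-value-in-integral-form fully rigorous.
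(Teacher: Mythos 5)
Your proposal is correct and follows essentially the same route as the paper's proof: the same algebraic factorization of $H_{\bar{x}}(x_1)-H_{\bar{x}}(x_2)$ through $(DG(\bar{x}))^{-1}$, the same fundamental-theorem-of-calculus representation of $G(x_1)-G(x_2)$ along the segment, and the same final estimate using \eqref{eq:Defbeta} and \eqref{eq:ass:DG}. Your extra remark justifying the integral mean-value form in the finite-dimensional setting is a sound (if tacit in the paper) addition, not a different argument.
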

\begin{proof}
	The proof follows standard lines starting with the fundamental theorem of calculus
	$G(x_1) = G(x_2)  + \int_0^1 DG\big( x_2+t(x_1-x_2)\big) (x_1-x_2)\, dt$. 
	Then, we get
	\begin{align*}
	H_{\bar{x}}(x_1) - H_{\bar{x}}(x_2)
	&= x_1 - x_2 - \big( DG({\bar{x}})\big)^{-1}\big( G(x_1)-G(x_2)\big) \\
	&\kern-40pt= \big( DG({\bar{x}})\big)^{-1} 
	\big\{ \big( DG({\bar{x}})\big)(x_1-x_2) - \big( G(x_1)-G(x_2) \big)\big\} \\
	&\kern-40pt= \big( DG({\bar{x}})\big)^{-1} 
	\Big\{ \big( DG({\bar{x}})\big)(x_1-x_2) 
		- \int_0^1 DG\big(x_2 + t(x_1-x_2)\big) (x_1-x_2)\, dt\Big\} \\
	&\kern-40pt= \big( DG({\bar{x}})\big)^{-1} 
		\int_0^1 \big\{ DG({\bar{x}}) - DG	\big(x_2 + t(x_1-x_2)\big)\big\} (x_1-x_2)\, dt.
	\end{align*}
	Next, we use standard estimates to obtain 
	\begin{align*}
	\| H_{\bar{x}}(x_1) - H_{\bar{x}}(x_2) \|_{X}
		&\le \frac1{\beta_{\bar{x}}}	\,
		\int_0^1 \big\| \big\{ DG({\bar{x}}) - DG	 \big(x_2 + t(x_1-x_2)\big)\big\} (x_1-x_2)\big\|_{Y} dt \\
		&\le \frac{\varrho}{\beta_{\bar{x}}}  \| x_1-x_2\|_{X}
		\int_0^1 \| {\bar{x}} - (x_2+t(x_1-x_2))\|_{X}\, dt,
	\end{align*}
	by \eqref{eq:ass:DG}, which proves the claim.
\end{proof}
\medskip

\begin{lemma}\label{Lem:2.4}
	Let the assumptions of Lemma \ref{Lem:2.3} hold. Then, the mapping $H_{\bar{x}}$ is a contraction on $\bar{B}_\gamma({\bar{x}}):=\{ x\in X:\, \| {\bar{x}}-x\|_{X}\le\gamma\}$ if $\gamma < \gamma_{\mathrm{contr.}}:=\frac{\beta_{\bar{x}}}{\varrho}$.
\end{lemma}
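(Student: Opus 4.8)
The plan is to reduce everything to the single estimate already furnished by Lemma~\ref{Lem:2.3}. That lemma bounds $\|H_{\bar{x}}(x_1)-H_{\bar{x}}(x_2)\|_X$ by $\frac{\varrho}{\beta_{\bar{x}}}\|x_1-x_2\|_X$ times the integral $\int_0^1\|\bar{x}-(x_2+t(x_1-x_2))\|_X\,dt$, so the only task is to control that integral by $\gamma$ whenever $x_1,x_2$ lie in the ball $\bar{B}_\gamma(\bar{x})$. The decisive observation will be that $\bar{B}_\gamma(\bar{x})$ is convex.

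First I would fix $x_1,x_2\in\bar{B}_\gamma(\bar{x})$ and $t\in[0,1]$ and note that the point $x_2+t(x_1-x_2)=(1-t)x_2+tx_1$ is a convex combination of $x_1$ and $x_2$. Since the closed ball is convex, this point again lies in $\bar{B}_\gamma(\bar{x})$, which by definition means $\|\bar{x}-(x_2+t(x_1-x_2))\|_X\le\gamma$. Consequently the integrand is bounded by $\gamma$ uniformly in $t$, and the integral over $[0,1]$ is itself at most $\gamma$.

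Inserting this into the estimate of Lemma~\ref{Lem:2.3} yields
$$
\|H_{\bar{x}}(x_1)-H_{\bar{x}}(x_2)\|_X\le\frac{\varrho\,\gamma}{\beta_{\bar{x}}}\,\|x_1-x_2\|_X
$$
for all $x_1,x_2\in\bar{B}_\gamma(\bar{x})$. The hypothesis $\gamma<\gamma_{\mathrm{contr.}}=\beta_{\bar{x}}/\varrho$ is then exactly what forces the Lipschitz constant $\varrho\gamma/\beta_{\bar{x}}$ to be strictly smaller than $1$, so that $H_{\bar{x}}$ is a contraction on $\bar{B}_\gamma(\bar{x})$, as claimed.

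Honestly there is no serious obstacle here: the result is essentially a one-line consequence of Lemma~\ref{Lem:2.3} once the convexity of the ball is used to pull the factor $\gamma$ out of the integral. I would only remark that if one wished to draw the full Banach fixed-point conclusion (existence of a fixed point in the ball), one would additionally have to establish the self-mapping property $H_{\bar{x}}(\bar{B}_\gamma(\bar{x}))\subseteq\bar{B}_\gamma(\bar{x})$, which hinges on the smallness of the residual $\|G(\bar{x})\|_Y$. That is a separate condition, however, and is not part of the present statement, which asserts only the strict Lipschitz (contraction) estimate.
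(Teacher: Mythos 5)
Your proof is correct and follows essentially the same route as the paper's: both bound the integrand $\|\bar{x}-(x_2+t(x_1-x_2))\|_X$ by $\gamma$ (the paper states this directly, you justify it via convexity of the ball) and then apply Lemma~\ref{Lem:2.3} to get the Lipschitz constant $\varrho\gamma/\beta_{\bar{x}}<1$. Your closing remark correctly identifies the self-mapping property as a separate issue, which the paper indeed treats in Lemma~\ref{Lem:2.5}.
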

\begin{proof}
	Let $x_1, x_2\in \bar{B}_\gamma({\bar{x}})$. Then, we have $ \| {\bar{x}} - (x_2+t(x_1-x_2))\|_{X} \le\gamma$ and the assertion follows immediately by Lemma \ref{Lem:2.3}.
\end{proof}
\medskip

\begin{lemma}\label{Lem:2.5}
	Let the assumptions of Lemma \ref{Lem:2.3} hold. Then, the mapping $H_{\bar{x}}: \bar{B}_\gamma(\bar{x})\to \bar{B}_\gamma(\bar{x})$ is a self-map for all $\bar{x}\in X$ with
		\begin{equation}\label{eq:indicator}
		\tau(\bar{x}) := \frac{2\varrho}{\beta_{\bar{x}}^2} \| G(\bar{x})\|_Y \le 1,
		\end{equation}
	and 
			\begin{equation}\label{eq:delta}
				\gamma \in [\gamma_{\mathrm{min}}, \gamma_{\mathrm{max}}]
				:= \frac{\beta_{\bar{x}}}{\varrho}
					\Big[ 1-\sqrt{1-\tau(\bar{x})},
						1+\sqrt{1-\tau(\bar{x})} \Big].	
		\end{equation}
\end{lemma}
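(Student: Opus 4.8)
The plan is to estimate $\|\bar{x} - H_{\bar{x}}(x)\|_X$ for an arbitrary $x \in \bar{B}_\gamma(\bar{x})$ and to show it stays bounded by $\gamma$ exactly when $\gamma$ lies in the claimed interval. First I would split via the triangle inequality
\[
	\|\bar{x} - H_{\bar{x}}(x)\|_X \le \|H_{\bar{x}}(x) - H_{\bar{x}}(\bar{x})\|_X + \|H_{\bar{x}}(\bar{x}) - \bar{x}\|_X,
\]
isolating a ``contraction'' term and a ``residual'' term. The residual term is immediate from the definition \eqref{eq:DefH}: since $H_{\bar{x}}(\bar{x}) - \bar{x} = -(DG(\bar{x}))^{-1} G(\bar{x})$, the definition \eqref{eq:Defbeta} of $\beta_{\bar{x}}$ yields $\|H_{\bar{x}}(\bar{x}) - \bar{x}\|_X \le \beta_{\bar{x}}^{-1}\|G(\bar{x})\|_Y$.

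For the first term I would apply Lemma \ref{Lem:2.3} with $x_1 = x$ and $x_2 = \bar{x}$. The key simplification is that the integrand collapses, $\|\bar{x} - (\bar{x} + t(x - \bar{x}))\|_X = t\,\|x - \bar{x}\|_X$, so the integral equals $\tfrac12\|x - \bar{x}\|_X$ and hence
\[
	\|H_{\bar{x}}(x) - H_{\bar{x}}(\bar{x})\|_X \le \frac{\varrho}{2\beta_{\bar{x}}}\|x - \bar{x}\|_X^2 \le \frac{\varrho}{2\beta_{\bar{x}}}\,\gamma^2,
\]
using $x \in \bar{B}_\gamma(\bar{x})$. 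Combining the two bounds, $H_{\bar{x}}$ is a self-map on $\bar{B}_\gamma(\bar{x})$ as soon as $\frac{\varrho}{2\beta_{\bar{x}}}\gamma^2 + \beta_{\bar{x}}^{-1}\|G(\bar{x})\|_Y \le \gamma$.

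Finally, I would read this off as a scalar quadratic inequality in $\gamma$. Multiplying by $\beta_{\bar{x}}$ and rearranging gives $\frac{\varrho}{2}\gamma^2 - \beta_{\bar{x}}\gamma + \|G(\bar{x})\|_Y \le 0$. This upward-opening parabola is nonpositive precisely on the interval between its roots, which are $\frac{\beta_{\bar{x}}}{\varrho}\bigl(1 \pm \sqrt{1 - \tau(\bar{x})}\bigr)$, once one recognizes the discriminant $\beta_{\bar{x}}^2 - 2\varrho\|G(\bar{x})\|_Y = \beta_{\bar{x}}^2\bigl(1 - \tau(\bar{x})\bigr)$ through the definition \eqref{eq:indicator} of $\tau(\bar{x})$. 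This is exactly $[\gamma_{\mathrm{min}}, \gamma_{\mathrm{max}}]$, and it also shows that the hypothesis $\tau(\bar{x}) \le 1$ is precisely what renders the discriminant nonnegative, so the interval is real and nonempty. There is no genuine obstacle here; the only point requiring care is the choice $x_2 = \bar{x}$ in Lemma \ref{Lem:2.3}, which produces the factor $\tfrac12$ that makes the resulting quadratic match the stated roots exactly.
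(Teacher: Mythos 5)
Your proposal is correct and follows essentially the same route as the paper's proof: the same triangle-inequality split of $H_{\bar{x}}(x)-\bar{x}$ into the contraction term (bounded via Lemma \ref{Lem:2.3} with $x_2=\bar{x}$, yielding the factor $\tfrac12$) and the residual term bounded by $\beta_{\bar{x}}^{-1}\|G(\bar{x})\|_Y$, followed by the identical quadratic inequality in $\gamma$ whose roots give $[\gamma_{\mathrm{min}},\gamma_{\mathrm{max}}]$. Your explicit identification of the discriminant $\beta_{\bar{x}}^2\bigl(1-\tau(\bar{x})\bigr)$ and the role of \eqref{eq:indicator} in keeping it nonnegative is slightly more detailed than the paper's terse final sentence, but it is the same argument.
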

\begin{proof}
	We start by the simple identity 
	$
		H_{\bar{x}}(x) - \bar{x}
		= H_{\bar{x}}(x) - H_{\bar{x}}(\bar{x}) + H_{\bar{x}}(\bar{x}) - \bar{x}
		= H_{\bar{x}}(x) - H_{\bar{x}}(\bar{x}) - (DG(\bar{x}))^{-1} G(\bar{x}), 
	$
	which holds for for any $x\in X$. 
	Then, 
	\begin{align*}
	\| H_{\bar{x}}(x) - \bar{x}\|_X
	&\le \| H_{\bar{x}}(x) - H_{\bar{x}}(\bar{x}) \|_X + \beta_{\bar{x}}^{-1} \| G(\bar{x})\|_Y.
	\end{align*}
	If $x\in \bar{B}_\gamma(\bar{x})$, we can further estimate the first term by Lemma \ref{Lem:2.3} and obtain
	\begin{align*}
	\| H_{\bar{x}}(x) - \bar{x}\|_X 
	& \le \frac{\varrho \gamma}{\beta_{\bar{x}}} \int_0^1 t \| \bar{x} - x\|_X\, dt + \beta_{\bar{x}}^{-1} \| G(\bar{x})\|_Y
	\le \frac{\varrho \gamma^2}{2\, \beta_{\bar{x}}} + \beta_{\bar{x}}^{-1} \| G(\bar{x})\|_Y.
	\end{align*}
	This latter term is less than $\gamma$ if and only if $\gamma^2 - \frac{2\beta_{\bar{x}}}{\varrho} \gamma + \frac{2}{\varrho} \| G(\bar{x})\|_Y\le 0$, which in turn is valid for $\gamma \in [\gamma_{\mathrm{min}}, \gamma_{\mathrm{max}}]$ and $\| G(\bar{x})\|_Y \le (2\, \varrho)^{-1} {\beta_{\bar{x}}^2}$.
\end{proof}
\medskip

Summarizing the above findings, we get the following result.

\begin{proposition}\label{Prop:2.1}
	Let the Fr\'echet derivative $DG(x)\in L(X,Y)$ exist for all $x\in X$, be invertible and Lipschitz continuous with constant $\varrho$. Let $\bar{x}\in X$ be given such that \eqref{eq:indicator} holds.
Then, there exists a unique fixed-point $x^\ast\in \bar{B}_\gamma(\bar{x})$ of $H_{\bar{x}}$ for all $\gamma\in [\gamma_{\mathrm{min}},\gamma_{\mathrm{contr.}})$.
\end{proposition}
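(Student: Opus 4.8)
The plan is to obtain the result as a direct application of the Banach fixed-point theorem, for which the two structural ingredients—self-mapping and contraction—have already been prepared in Lemma \ref{Lem:2.5} and Lemma \ref{Lem:2.4}. The only genuine work is to verify that the parameter ranges in those two lemmas overlap on the interval $[\gamma_{\mathrm{min}},\gamma_{\mathrm{contr.}})$ appearing in the statement, and then to supply the completeness needed for the fixed-point principle.

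First I would compare the relevant thresholds. By the definition $\gamma_{\mathrm{contr.}}=\beta_{\bar{x}}/\varrho$ from Lemma \ref{Lem:2.4} and the definition of $\gamma_{\mathrm{min}},\gamma_{\mathrm{max}}$ in \eqref{eq:delta}, one sees that $\gamma_{\mathrm{contr.}}$ is exactly the arithmetic mean of $\gamma_{\mathrm{min}}$ and $\gamma_{\mathrm{max}}$. Since \eqref{eq:indicator} guarantees $\tau(\bar{x})\le 1$, hence $\sqrt{1-\tau(\bar{x})}\in[0,1]$, this yields $\gamma_{\mathrm{min}}\le\gamma_{\mathrm{contr.}}\le\gamma_{\mathrm{max}}$ and therefore $[\gamma_{\mathrm{min}},\gamma_{\mathrm{contr.}})\subseteq[\gamma_{\mathrm{min}},\gamma_{\mathrm{max}}]$. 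Here one implicitly uses $\tau(\bar{x})<1$ to obtain a nonempty range; for $\tau(\bar{x})=1$ the interval degenerates and the claim is vacuous.

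Next, fixing any $\gamma\in[\gamma_{\mathrm{min}},\gamma_{\mathrm{contr.}})$, I would invoke the two lemmas on the closed ball $\bar{B}_\gamma(\bar{x})$. Because $\gamma\in[\gamma_{\mathrm{min}},\gamma_{\mathrm{max}}]$ and \eqref{eq:indicator} holds, Lemma \ref{Lem:2.5} shows that $H_{\bar{x}}$ maps $\bar{B}_\gamma(\bar{x})$ into itself; because $\gamma<\gamma_{\mathrm{contr.}}$, Lemma \ref{Lem:2.4} shows that $H_{\bar{x}}$ is a contraction on $\bar{B}_\gamma(\bar{x})$ (with Lipschitz constant $\varrho\gamma/\beta_{\bar{x}}<1$).

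Finally, since $X$ is finite-dimensional and hence complete, the closed ball $\bar{B}_\gamma(\bar{x})$ is a nonempty complete metric space, so the Banach fixed-point theorem delivers a unique fixed point $x^\ast\in\bar{B}_\gamma(\bar{x})$ of $H_{\bar{x}}$, which is the assertion. The main—and essentially only—obstacle is the interval bookkeeping of the first step; everything else is a textbook invocation of the contraction mapping principle, the quantitative content having already been isolated in the preparatory lemmas.
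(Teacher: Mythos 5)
Your proposal is correct and follows the same route as the paper: the paper's proof is a one-line invocation of the Banach fixed-point theorem based on Lemma~\ref{Lem:2.3} (whence the contraction property of Lemma~\ref{Lem:2.4}) and the self-mapping property of Lemma~\ref{Lem:2.5}. Your explicit verification that $\gamma_{\mathrm{contr.}}=\beta_{\bar{x}}/\varrho$ is the midpoint of $[\gamma_{\mathrm{min}},\gamma_{\mathrm{max}}]$, so that $[\gamma_{\mathrm{min}},\gamma_{\mathrm{contr.}})\subseteq[\gamma_{\mathrm{min}},\gamma_{\mathrm{max}}]$ under \eqref{eq:indicator} (with the degenerate case $\tau(\bar{x})=1$ noted), merely spells out the interval bookkeeping the paper leaves implicit.
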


\begin{proof}
	The proof follows from Banach fixed-point theorem in view of Lemma \ref{Lem:2.3} and \ref{Lem:2.5}. 
\end{proof}
\medskip

Proposition \ref{Prop:2.1} yields a well-posedness result, but at the same time also provides us with an error estimate as we shall see next.

\begin{corollary}\label{Cor:1}
	Under the assumptions of Proposition \ref{Prop:2.1}, the estimate
	\begin{equation}\label{eq:Est}
		\| x^\ast - \bar{x}\|_X \le  \frac{\beta_{\bar{x}}}{\varrho} (1-\sqrt{1- \tau(\bar{x})})
	\end{equation}
	holds for $\bar{x}\in X$ satisfying \eqref{eq:indicator}.
\end{corollary}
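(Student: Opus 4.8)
The plan is to read the estimate off directly from the localization of the fixed point supplied by Proposition \ref{Prop:2.1}; essentially no new computation is required, so the proof is a short one. First I would recall that Proposition \ref{Prop:2.1} asserts the existence of a \emph{unique} fixed point $x^\ast$ of $H_{\bar{x}}$ lying in the closed ball $\bar{B}_\gamma(\bar{x})$ for \emph{every} admissible radius $\gamma\in[\gamma_{\mathrm{min}},\gamma_{\mathrm{contr.}})$. The uniqueness is what lets me speak of a single point $x^\ast$ common to all these balls: if $\gamma_1<\gamma_2$ are two admissible radii, then $\bar{B}_{\gamma_1}(\bar{x})\subset\bar{B}_{\gamma_2}(\bar{x})$, and the fixed point produced in the smaller ball is a fortiori a fixed point in the larger one, hence coincides with the unique fixed point there. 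Thus $x^\ast$ is independent of $\gamma$ and lies in the intersection of all the admissible balls.

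The decisive step is then purely elementary. Since $x^\ast\in\bar{B}_\gamma(\bar{x})$ for every $\gamma\in[\gamma_{\mathrm{min}},\gamma_{\mathrm{contr.}})$, the definition of the closed ball gives $\|x^\ast-\bar{x}\|_X\le\gamma$ for each such $\gamma$. Taking the infimum over the admissible radii yields $\|x^\ast-\bar{x}\|_X\le\gamma_{\mathrm{min}}$. Equivalently, because the interval $[\gamma_{\mathrm{min}},\gamma_{\mathrm{contr.}})$ is \emph{closed} at its left endpoint, the smallest admissible radius $\gamma=\gamma_{\mathrm{min}}$ is itself admissible, so $x^\ast\in\bar{B}_{\gamma_{\mathrm{min}}}(\bar{x})$ directly. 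Substituting the explicit value $\gamma_{\mathrm{min}}=\frac{\beta_{\bar{x}}}{\varrho}\big(1-\sqrt{1-\tau(\bar{x})}\big)$ from \eqref{eq:delta} reproduces exactly the asserted bound \eqref{eq:Est}.

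I do not expect a genuinely hard step here: all the real work is inherited from the self-map and contraction analysis of Lemmas \ref{Lem:2.4} and \ref{Lem:2.5} underlying Proposition \ref{Prop:2.1}. The only point deserving a moment's care is the justification that the localizing radius may be pushed all the way down to $\gamma_{\mathrm{min}}$ rather than merely to some $\gamma>\gamma_{\mathrm{min}}$. This is legitimate because \eqref{eq:delta} shows that $\gamma_{\mathrm{min}}$ meets the self-map condition of Lemma \ref{Lem:2.5} (with equality in the defining quadratic inequality), while the contraction condition $\gamma<\gamma_{\mathrm{contr.}}$ of Lemma \ref{Lem:2.4} still holds at $\gamma_{\mathrm{min}}$ whenever $\tau(\bar{x})<1$; hence $\bar{B}_{\gamma_{\mathrm{min}}}(\bar{x})$ is already invariant and the Banach fixed-point argument applies on it. Consequently the infimum of admissible radii is attained, and the sharp estimate \eqref{eq:Est} follows.
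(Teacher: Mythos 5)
Your proposal is correct and follows the paper's own (one-line) argument: the paper likewise invokes Proposition \ref{Prop:2.1} with the minimal admissible radius $\gamma=\gamma_{\mathrm{min}}$ and reads off \eqref{eq:Est} from $x^\ast\in\bar{B}_{\gamma_{\mathrm{min}}}(\bar{x})$ together with the explicit formula \eqref{eq:delta}. Your additional care about why the left endpoint $\gamma_{\mathrm{min}}$ is admissible (requiring $\tau(\bar{x})<1$ for the strict contraction) is a reasonable elaboration of what the paper leaves implicit, not a different route.
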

\begin{proof}
	The mapping $H_{\bar{x}}$ has a unique fixed-point $x^\ast$ in $\bar{B}_\gamma(\bar{x})$ for $\gamma=\gamma_{\mathrm{min}}$.
\end{proof}
\medskip

\begin{rem} The following observations are potentially crucial for the numerical realization:
	\begin{compactitem}
	\item[(a)]
	Note, that	the quantity $\tau(\bar{x})$ is an a posterori indicator provided that $\varrho$ (or an estimate) is available and $\beta_{\bar{x}}$ (or an estimate) is computable. In fact, $\| G(\bar{x})\|_Y$ is the computable residual of the nonlinear equation and thus \eqref{eq:indicator} can be verified a posteriori. We will later use this observation to obtain an error estimate for a numerical approximation $\bar{x}$ of the solution of the nonlinear problem $G(x)=0$.
	\item[(b)] 
	Obviously, the assumptions on the Fr\'echet derivative $DG$ only need to hold in a neighborhood of $\bar{x}$.
	\end{compactitem}
\end{rem}

\section{The Reduced Basis Method (RBM)}\label{Sec:4}

We now introduce the Reduced Basis Method (RBM) for the numerical solution of the parameterized HJB equation and start by reviewing the standard RB setting. The main idea is to select in an offline phase so-called snapshot parameters
$$
	S_N := \{ \mu_1, \ldots , \mu_N\},
$$
and compute the corresponding snapshots $x_i:= x^{\ast,\cN}(\mu_i)=(\gamma^{\ast,\cN}(\mu_i), u^{\ast,\cN}(\mu_i))\in\X^\cN$ as the solution of \eqref{Eq:HJB:cN}, \eqref{Eq:HJB:ucN}, or in other terms, \eqref{Eq:truthproblem} in $\Y^\cN$. The precise selection of the snapshots will be explained later. Then, we define the RB space as $X_N:=\mathrm{span}\{ x_i^\cN:\, i=1, \ldots, N\}$, assuming that $N\ll\cN$.

An RB approximation $x_N^\ast(\mu)\in X_N$ of $x^{\ast,\cN}(\mu)\in\X^\cN$, $\mu\in\cD\setminus S_N$,  is then computed by the solution of
$$
	\cG(\mu) (x_N^\ast(\mu)) = 0 \quad\mathrm{in }\, Y_N(\mu),
$$
where $Y_N(\mu)\subset\Y^\cN$ is some test space which is possibly $\mu$-dependent and such that the reduced problem is stable. 

The aim is to use the RBB-theory for developing an a posteriori error estimate for the RB-approximation. To this end, we fix some ${\bar{x}}\in\X^\cN$ (to be determined below) and define for any $x^\cN=(\gamma^\cN,u^\cN)\in\X^\cN$ the mapping
\begin{equation}\label{eq:DefHw}
	H_{\bar{x}}(\mu) (x^\cN) := x^\cN - ((D\cG^\cN(\mu)) ({\bar{x}^\cN}))^{-1} \cG^\cN(\mu) (x^\cN),
	\,\,\,
	H_{\bar{x}}(\mu):  \X^\cN \to \X^\cN.
\end{equation}
The idea is to use $\bar{x} = x_N^\ast(\mu) = (\gamma_N^\ast(\mu), u_N^\ast(\mu))$ and set analogously to \eqref{eq:Defbeta}
\begin{align}
	\beta_N (\mu) := \beta_{x_N^\ast(\mu)}(\mu) 
		&:= \| ((D\cG^\cN(\mu)) (x_N^\ast(\mu) ))^{-1}\|^{-1}_{L(\Y^\cN,\X^\cN)} \label{eq:def:beta} \\
		&= \inf_{x^\cN\in \X^\cN} \frac{\| (D\cG^\cN(\mu) (x_N^\ast(\mu) ))(x^\cN)\|_{\Y^\cN} }{\|x^\cN\|_{\X^\cN}}.
		\notag
\end{align}

We mention \cite{Alla,AllaHinze} for a POD-based model reduction approach of the HJB-equation, where the reduction is performed with respect to time.

\subsection{Computation of a lower inf-sup bound}
To obtain a lower bound for the inf-sup constant $\beta_N(\mu)$, we follow an idea presented in \cite{Masa}. 
We start by detailing the computation of a lower bound for $\beta_N (\mu) $. Since $\Y^\cN=L(\myGamma, \R)^\cN\times\R^\cN$ is a Hilbert space, we get
\begin{align*}
	\beta_N (\mu) 
		&= \inf_{x^\cN\in \X^\cN} \sup_{\tilde y^\cN \in \Y^\cN} 
			\frac{\langle (D\cG^\cN(\mu) (x_N(\mu)))(x^\cN), \tilde y^\cN \rangle_{\Y^\cN}}{\|x^\cN\|_{\X^\cN} \|\tilde y^\cN\|_{\Y^\cN}}.
\end{align*}
For any given $\bar{x}, x\in \X^\cN$, we define the \emph{supremizer} $s(\mu; \bar{x},x)\in\Y^\cN$ as
\begin{equation}\label{Eq:Supremizer}
	s(\mu; \bar{x},x) :=
		\arg \sup_{\tilde y^\cN \in \Y^\cN} 
			\frac{\langle (D\cG^\cN(\mu) (\bar{x}))(x), \tilde y^\cN \rangle_{\Y^\cN}}{\|x\|_{\X^\cN} \|\tilde y^\cN\|_{\Y^\cN}},
\end{equation}
so that
\begin{align*}
	\beta_N (\mu) 
		&= \inf_{x^\cN\in \X^\cN}  
			\frac{\langle (D\cG^\cN(\mu) (x_N^\ast(\mu)))(x^\cN), s(\mu; x_N^\ast(\mu), x^\cN) \rangle_{\Y^\cN}}{\|x^\cN\|_{\X^\cN}\, \|s(\mu; x_N^\ast(\mu), x^\cN)\|_{\Y^\cN}} \\
		&\ge \inf_{x^\cN\in \X^\cN}  
			\frac{\langle (D\cG^\cN(\mu) (x^\ast_N(\mu)))(x^\cN), \tilde s(\mu; x^\cN) \rangle_{\Y^\cN}}{\|x^\cN\|_{\X^\cN}\, \| \tilde s(\mu; x^\cN)\|_{\Y^\cN}},
\end{align*}
where $\tilde s(\mu; x^\cN)\in\Y^\cN$ is arbitrary and will be chosen later. Then, we obtain
\begin{align}
	\beta_N (\mu) 
		&\ge 
			\bigg[\inf_{x^\cN\in \X^\cN}\kern-5pt  \frac{\| \tilde s(\mu; x^\cN)\|_{\Y^\cN}}{\|x^\cN\|_{\X^\cN}}\bigg]
			\bigg[
			\inf_{x^\cN\in \X^\cN}\kern-5pt
			\frac{\langle (D\cG^\cN(\mu) (x_N(\mu)))(x^\cN), \tilde s(\mu; x^\cN)\rangle_{\Y^\cN}}{\| \tilde s(\mu; x^\cN)\|_{\Y^\cN}^2}\bigg]
				\notag \\
		&=: \beta^{\textrm{offline}}_{\textrm{LB}}(\mu)\, \beta^{\textrm{online}}_{\textrm{LB}}(\mu),
		\label{eq:betaLB:1}
\end{align}
where $\beta^{\textrm{offline}}_{\textrm{LB}}(\mu)$, $\beta^{\textrm{online}}_{\textrm{LB}}(\mu)$ are lower bounds to be computed offline and online, respectively. 

We determine offline a small set of $R\in\N$ so called \emph{anchor} parameters $\bar{S}_R := \{\bar\mu_1, \ldots , \bar\mu_R\}\subset\cD$.
Online, for a given $\mu\in\cD$, we determine the `closest' anchor parameter $\bar\mu(\mu)\in \bar{S}_R$ defined by minimizing $|\mu - \bar\mu|$  over $\bar\mu\in \bar{S}_R$. Since we choose $\tilde s(\mu; x^\cN) := s(\bar\mu(\mu); x_N^\ast(\mu), x^\cN)$, it holds $\beta^{\textrm{offline}}_{\textrm{LB}}(\mu) = \beta_N(\bar\mu(\mu))$. Thus, in the offline-phase $\beta_N(\bar\mu_r)$, $r=1, \ldots, R$, are precomputed solving generalized eigenvalue problems. 
To determine the anchor points $\bar\mu_r$, we use a Greedy-type method detailed in Algorithm \ref{Alg:infsup} based upon a \emph{training set} $\Xitrainanchor\subset\cD$. Of course, the constant $\frac12$ in line \ref{alg:12} of Algorithm \ref{Alg:infsup} could be verified. This choice is motivated by \eqref{eq:betaLB} since Algorithm \ref{Alg:infsup} ensures that
$
	\min\limits_{\mu\in\Xitrainanchor} \beta^{\textrm{online}}_{\textrm{LB}}(\mu) > \frac12,
$
whose relevance will be described next.

\begin{algorithm}\caption{Greedy selection of anchor points.}\label{Alg:infsup}
	\begin{algorithmic}[1]
		\STATE choose $\bar\mu_1 \in \cD$ arbitrarily, $N \leftarrow 1$, $\bar{S}_R := \{\bar\mu_1\}$, compute $\beta^{\textrm{offline}}_{\textrm{LB}}(\bar\mu_{1})$
		\WHILE{$\min\limits_{\mu\in\Xitrainanchor} \beta^{\textrm{online}}_{\textrm{LB}}(\mu) \leqslant \frac12$} \label{alg:12}
			\STATE $\bar\mu_{N+1} \leftarrow \mathop{\arg\min}\limits_{\mu\in\Xitrainanchor} \beta^{\textrm{online}}_{\textrm{LB}}(\mu)$ 
			\STATE $\bar{S}_R \leftarrow  \bar{S}_R\cup\{\bar\mu_{N+1}\}$, compute $\beta^{\textrm{offline}}_{\textrm{LB}}(\bar\mu_{N+1})$
			\STATE $N\leftarrow N+1$
		\ENDWHILE
	\end{algorithmic}	
\end{algorithm}

The online part  $\beta^{\textrm{online}}_{\textrm{LB}}(\mu)$ of the bound relies on a separation of $D\cG^\cN(\mu)$ with respect to the parameter and is computed by the well-known Successive Constraint Method (SCM) \cite{SCM}, which will be discussed below.

Altogether, we obtain an inf-sup lower bound
\begin{equation}\label{eq:betaLB}
	\beta^{\text{LB}}_N(\mu) 
		:= \beta^{\textrm{offline}}_{\textrm{LB}}(\bar\mu(\mu))\, \beta^{\textrm{online}}_{\textrm{LB}}(\mu)
		> \frac12 \beta^{\textrm{offline}}_{\textrm{LB}}(\mu).
\end{equation}
In order to compute an upper bound for the indicator in \eqref{eq:indicator}, we define
\begin{equation}\label{eq:indicator:RB}
	\tau_N(\mu) := \tau(x_N^\ast(\mu)) = \frac{2\varrho}{(\beta_N(\mu))^2}\, \| \cG^\cN(\mu)(x_N^\ast(\mu))\|_{\Y^\cN}
\end{equation}
and the corresponding upper bound
\begin{equation}\label{eq:indicator:RBUB}
	\tau_N^{\textrm{UB}}(\mu) := \frac{2\varrho}{(\beta_N^{\textrm{LB}}(\mu))^2}\, \| \cG^\cN(\mu)(x_N^\ast(\mu))\|_{\Y^\cN}.
\end{equation}
Using \eqref{eq:Est}, the error can thus be estimated with
\begin{equation}\label{eq:estimator:RB}
	\Delta_N(\mu) := \frac{\beta_N^{\textrm{LB}}(\mu)}{\varrho} \Big( 1-\sqrt{1-\tau_N^{\textrm{UB}}(\mu)}\Big).
\end{equation}

\subsection{Offline/online-separation}
For an efficient separation of the required calculations into a possibly costly offline and a highly efficient online phase, one usually requires a separation of the parameter and other types of variables, sometimes also called affine decomposition. Here, this means
	\label{Eq:paramsep}
	\begin{align}
		L^\gamma(\mu; u) 
			&= \sum_{q=1}^{Q_L} \vartheta_q^L(\mu)\, L^\gamma_q(u), 
		&
		f^\gamma(\mu) 
			&= \sum_{q=1}^{Q_f} \vartheta_q^f(\mu)\, f^\gamma_q,
	\end{align}
with functions $\vartheta_q^L, \vartheta_q^f:\cD\to\R$. 
This separability in the parameter also transfers to their discretized variants $L^{\cN,\gamma^\cN}$, $f^{\cN,\gamma^\cN}$ and to $D\cG$ so that \eqref{eq:DG} reads as
\begin{align}
	(D(\cG^\cN(\mu))(x^\cN))(\tilde x^\cN) &= &&
		\sum_{q=1}^{Q_L} \vartheta_q^{L^\cN}(\mu) \begin{pmatrix}
			\partial_\gamma^2 [ L_q^{\cN,\gamma^\cN}(u^\cN) ] (\tilde \gamma^\cN) + \partial_\gamma [ L_q^{\cN,\gamma^\cN}(\tilde u^\cN) ] \\
			\partial_\gamma [ L_q^{\cN,\gamma^\cN}(u^\cN) ] (\tilde \gamma^\cN) + L_q^{\cN,\gamma^\cN}(\tilde u^\cN) \end{pmatrix} \notag \\ &&&
		- \sum_{q=1}^{Q_f} \vartheta_q^{f^\cN}(\mu) \begin{pmatrix}
			\partial_\gamma^2 [ f_q^{\cN,\gamma^\cN} ] (\tilde \gamma^\cN) \\
			\partial_\gamma [ f_q^{\cN,\gamma^\cN} ] (\tilde \gamma^\cN) \end{pmatrix} \notag \\ &
		=: && \sum_{q=1}^{Q_L} \vartheta_q^{L^\cN}(\mu) \, D\cG_q^L(x^\cN)(\tilde x^\cN)
    		- \sum_{q=1}^{Q_f} \vartheta_q^{f^\cN}(\mu) \, D\cG_q^f(\tilde x^\cN).  \label{Eq:DG:paramsep}
\end{align}
By inserting \eqref{Eq:DG:paramsep} and the representation $x_N(\mu) = \sum_{n=1}^N \bx_N^n(\mu) \, \xi_n$ in the definition \eqref{eq:betaLB:1} of $\beta^{\textrm{online}}_{\textrm{LB}}(\mu)$ we obtain:
\begin{subequations}
\begin{align}
	\beta^{\textrm{online}}_{\textrm{LB}}(\mu) = \inf_{x^\cN\in\X^\cN} \Bigg[ &
		 \frac{\langle (D\cG^\cN(\bar\mu) (x_N(\bar\mu)))(x^\cN), \tilde s(\mu; x^\cN)\rangle_{\Y^\cN}}{\| \tilde s(\mu; x^\cN)\|_{\Y^\cN}^2}  \notag \\ & \kern-35pt
		+\sum_{q=1}^{Q_L} \sum_{n=1}^N \left( \vartheta_q^{L^\cN}(\mu)\bx_N^n(\mu) - \vartheta_q^{L^\cN}(\bar\mu)\bx_N^n(\bar\mu) \right)
			\frac{\langle D\cG_q^L(\xi_n)(x^\cN) , \tilde s(\mu; x^\cN)\rangle_{\Y^\cN} }{\| \tilde s(\mu; x^\cN)\|_{\Y^\cN}^2}  \label{Eq:betaonline:1} \\&
		\hspace*{-12mm}
		+\sum_{q=1}^{Q_f} \left( \vartheta_q^{f^\cN}(\mu) - \vartheta_q^{f^\cN}(\bar\mu) \right)
			\frac{\langle D\cG_q^f(\tilde x^\cN) , \tilde s(\mu; x^\cN)\rangle_{\Y^\cN} }{\| \tilde s(\mu; x^\cN)\|_{\Y^\cN}^2}   \Bigg]. 
			\label{Eq:betaonline:2}
\end{align}
\end{subequations}
In order to apply the Successive Constraint Method, we summarize the terms in \eqref{Eq:betaonline:1} and \eqref{Eq:betaonline:2} in $\cT(\mu; z)$
\begin{align*}
	\cT(\mu; z) := &
		\sum_{q=1}^{Q_L} \sum_{n=1}^N \left( \vartheta_q^{L^\cN}(\mu)\bx_N^n(\mu) - \vartheta_q^{L^\cN}(\bar\mu)\bx_N^n(\bar\mu) \right) z_{n+(q-1)N} \\&+
		\sum_{q=1}^{Q_f} \left( \vartheta_q^{f^\cN}(\mu) - \vartheta_q^{f^\cN}(\bar\mu) \right) z_{Q_LN+q}.
\end{align*}
Since we have chosen $\tilde s(\mu; x^\cN) = (D\cG^\cN(\bar\mu) (x_N(\bar\mu)))(x^\cN)$, we get the representation 
$\beta^{\textrm{online}}_{\textrm{LB}}(\mu) = 1 + \displaystyle{\inf_{z\in Z_N} \cT(\mu;z)}$ with 
\begin{align*}
	Z_N = \Bigg\{ z \in \R^{Q_LN+Q_f} : \exists \, x \in \X^\cN, \, &
		z_{n+(q-1)N} = \frac{\langle D\cG_q^L(\xi_n)(x^\cN) , \tilde s(\mu; x^\cN)\rangle_{\Y^\cN} }{\| \tilde s(\mu; x^\cN)\|_{\Y^\cN}^2}, \\ &
		z_{Q_LN+q} = \frac{\langle D\cG_q^f(\tilde x^\cN) , \tilde s(\mu; x^\cN)\rangle_{\Y^\cN} }{\| \tilde s(\mu; x^\cN)\|_{\Y^\cN}^2} & \Bigg\}.
\end{align*}
To obtain a lower bound for $\inf_{z\in Z_N} \cT_N(\mu;z)$ we use the Successive Constraint Method from \cite{SCM} and use an appropriate superset of $Z_N$.

Next, we need the Lipschitz-constant $\varrho$ in \eqref{eq:DGLip}, which in the general case could also be $\mu$-dependent. In the simpler case $\varrho\ne\varrho(\mu)$, this constant can be computed up to numerical precision by solving a generalized eigenvalue problem offline.

Finally, it remains to compute the residual $ \| \cG^\cN(\mu)(x_N^\ast(\mu))\|_Y$, which depends on the problem at hand and also requires the above mentioned separation. We will detail this for a specific numerical example in Section \ref{Sec:4} below. Combining all this, the inf-sup lower bound $\beta^{\text{LB}}_N(\mu)$ in \eqref{eq:betaLB}, the indicator $\tau_N(\mu)$ in \eqref{eq:indicator:RB} and the error estimator $\Delta_N(\mu)$ can be computed online-efficient.

\subsection{Greedy Algorithm}\label{sec_greedy}
Now, we describe the computation of the snapshot parameters $S_N$. To this end, we use a standard Greedy method over the error estimator $\Delta_N(\mu)$ based upon a training set $\Xitrain\subset\cD$. The scheme is displayed in Algorithm \ref{Alg:Greedy}. 
As we have seen, the error estimator $\Delta_N(\mu)$ is only meaningful if the indicator $\tau_N(\mu)$ is less than one. In order to ensure this, we perform a 2-stage Greedy similar to e.g.\ \cite{Tonn,Deparis,Veroy1,Veroy2,Veroy3}. 
In the first step, we determine a preliminary snapshot set $S_M$ such that $\tau_M(\mu)<1$ for all $\mu\in\Xitrain$. This is realized in lines \ref{alg:2.2}-\ref{alg:2.3} in Algorithm \ref{Alg:Greedy}. The second loop in lines \ref{alg:2.4}-\ref{alg:2.5} enriches the so determined $S_M$ so that for the resulting set $S_N$, we get $\max_{\mu\in\Xitrain} \Delta_N(\mu) \le \varepsilon_\mathrm{tol}$. In addition, we determine an orthonomal set $\cX_N$ of functions and define the RB trial space as $X_N:=\mathrm{span}(\cX_N)$.

\begin{algorithm}\caption{Greedy Algorithm}\label{Alg:Greedy}
	\begin{algorithmic}[1]
		\STATE choose $\mu_1 \in \cD$ arbitrarily, $\xi_1 \leftarrow u^{\ast,\cN}(\mu_1)$, $N \leftarrow 1$, $S_N:=\{ \mu_1\}$,
			$\cX_N:=\{\xi_1\}$
		\WHILE{$\max_{\mu\in\Xitrain} \tau_N(\mu) \geqslant 1$}\label{alg:2.2}
			\STATE $\mu_{N+1} \leftarrow \mathop{\arg \max}_{\mu\in\Xitrain}  \tau_N(\mu)$
			\STATE $S_{N+1}\leftarrow S_N\cup\{ \mu_{N+1}\}$, $\tilde\xi_{N+1}\leftarrow u^{\ast,\cN}(\mu_{N+1})$
			\STATE orthonormalize $\tilde\xi_{N+1}$ w.r.t.\ $\cX_N$ $\rightarrow \xi_{N+1}$,  
				 $\cX_{N+1} \leftarrow \cX_N\cup \{ \xi_{N+1}\}$
			\STATE $N \leftarrow N + 1$
		\ENDWHILE\label{alg:2.3}
		\STATE $M\leftarrow N$
		\WHILE{$\max_{\mu\in\Xitrain} \Delta_N(\mu) > \varepsilon_\mathrm{tol}$}\label{alg:2.4}
			\STATE $\mu_{N+1} \leftarrow \mathop{\arg \max}_{\mu\in\Xitrain} \Delta_N(\mu)$
			\STATE $S_{N+1}\leftarrow S_N\cup\{ \mu_{N+1}\}$, $\tilde\xi_{N+1}\leftarrow u^{\ast,\cN}(\mu_{N+1})$
			\STATE orthonormalize $\tilde\xi_{N+1}$ w.r.t.\ $\cX_N$ $\rightarrow \xi_{N+1}$,  
				 $\cX_{N+1} \leftarrow \cX_N\cup \{ \xi_{N+1}\}$
			\STATE $N \leftarrow N + 1$
		\ENDWHILE\label{alg:2.5}
	\end{algorithmic}	
\end{algorithm}

\section{Numerical Experiments}\label{Sec:5}
We now present results of some numerical experiments. To this end, we use the following data: $T=1$, $\Omega:=(-150,150)\subset\R$, $d=1$, $\cD=[0,100]$.
In view of Remark \ref{Rem:quad}, we consider quadratic abatement costs $\gamma(t,x)^2 / 2$ which are discounted to the end $T$ of the trading period with an interest rate of $0.05$. This results in the following running costs:
\[
	(f^\gamma(\mu))(t,x) := \frac{\gamma(t,x)^2}2\, e^{0.05(t-T)},
\]

As a (single) parameter $\mu\in\R$, we chose the overall amount of emissions by the involved companies that would arise without any reduction motivated by the EU-ETS, i.e., the ``business as usual emissions''. The control $\gamma(t,x)$ denotes the amount of avoided emissions so that $\mu - \gamma(t,x)$ is the remaining emission, i.e., the drift. 
Together with a constant diffusion term, this yields:
\[
	(A^\gamma(\mu;u)) := -\frac12 u''(t,x) - (\gamma(t,x)-\mu) u'(t,x)
\]
The permits expire worthless at the end of the trading period and the penalty payment is normalized to one currency unit, which is reflected by the terminal condition  $u_T(x):=x^+$ for $x\in\bar\Omega$ in \eqref{Eq:HJB:3}.
The Neuman conditions are modeled by $g_{\textrm{left}}(t)\equiv g_{\textrm{left}}=0$ at $x=-150$ and $g_{\textrm{right}}(t)\equiv g_{\textrm{right}}=1$ at $x=150$.  The function $\gamma\mapsto L^\gamma(\mu;u) := \partial_tu + A^\gamma(\mu; u)$ is $C^1$, so that the optimal control can in fact be characterized as a critical point.

\subsection{Discretization}
The discretization is done by finite differences, fully implicit in time with step size $\Delta t=\frac1{109}$ and using central differences on a regular grid with mesh size $\Delta x=1.5 = \frac{|\Omega|}{200}$. 
In such a discretization, the discrete representation of the derivatives is a linear combination of the value function $u$ at different discretization points. 

The obvious fact that the above defined $A^\gamma(\mu;u)$ and $f^\gamma(\mu)$ are smooth in $\gamma$ and $u$ thus also holds for their discretizations. Due to this smoothness, the assumptions of Lemma \ref{Lemma:Lipschitz} hold.
In order to stabilize  the discrete equations, we use artificial diffusion which is chosen parameter-independent in order to obtain a $\mu$-independent discretization. Clearly, $A^\gamma(\mu;u)$ and also $f^\gamma(\mu)$ are separable in the parameter which also translates to their discrete versions.

\subsection{Estimation of the inf-sup constant}
We start by reporting results concerning the computation of the lower bound $\beta^{\text{LB}}_N(\mu)$ in \eqref{eq:betaLB} for the inf-sup constant using the anchor point strategy described in Algorithm \ref{Alg:infsup} above. We compute an `exact' value of $\beta_N(\mu)$ by solving the corresponding high-dimensional (`truth') generalized eigenvalue problem. The results are shown in Figure \ref{fig_BRR_beta_SCM_testB__10basen_all}. The thin lines represent the intermediate lower bounds which are generated by successive iterations of Algorithm \ref{Alg:infsup}. Different colors represent different stages of the algorithm. The final lower bound $\beta^{\text{LB}}_N(\mu)$ on the training set is marked with the thick line.
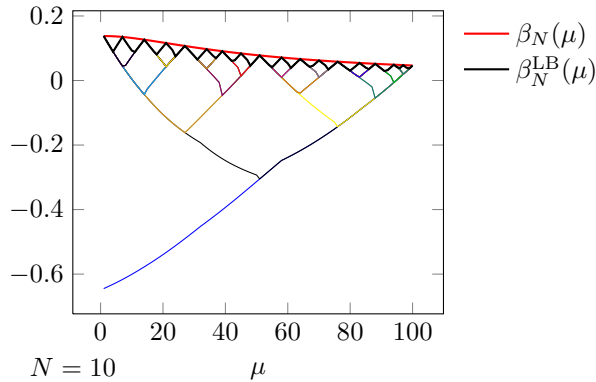
\begin{figure}[htb]
	\begin{center}
		\begin{tikzpicture}
		\begin{axis}[
			width=0.5\textwidth,
			xlabel={$\mu$},
			cycle list name=color list,
			legend pos=outer north east,
			legend cell align=left,
			legend style={draw=none},
			clip=false,
			]
			\addplot+[thick] table {BRR_beta_SCM_testB__10basen_all.txt};
			\foreach \x in {2,3,...,18} {\addplot+[thin] table[y index=\x] {BRR_beta_SCM_testB__10basen_all.txt};}%
			\addplot+[thick, black] table[y index=19] {BRR_beta_SCM_testB__10basen_all.txt};
			\legend{$\beta_N(\mu)$,,,,,,,,,,,,,,,,,,$\beta^{\text{LB}}_N(\mu)$};
			\node[anchor=north] at (xticklabel cs:0){$N=10$};
		\end{axis}
		\end{tikzpicture}
		\caption{Lower bound $\beta^{\text{LB}}_N(\mu)$ for the inf-sup constant $\beta_N(\mu)$. Different colors indicate iterations of the algorithm.}
		\label{fig_BRR_beta_SCM_testB__10basen_all}
	\end{center}
\end{figure}

\subsection{Error estimator}
Next, we test the performance of the error bound $\Delta_N(\mu)$ and the indicator $\tau_N^{\textrm{UB}}(\mu)$ given by the BRR theory. 

To obtain the Lipschitz constant $\varrho$, we use the decomposition given by Lemma \ref{Lemma:Lipschitz}. Since $f$ depends quadratically on $\gamma$ and $L$ depends linearly on $\gamma$ and on $v$ for the EU-ETS, it holds that $\varrho^L_2 = \varrho^f_2 = 0$. Furthermore,
\begin{align*}
	\varrho^f_1 &= \sup_{\gamma^\cN\in\myGammaN} \frac{\| \partial_\gamma f^{\cN,\gamma^\cN} \|_{L(\myGamma,\R)^\cN} }{\| \gamma^\cN \|_{\myGammaN} }
		= \sup_{\gamma^\cN\in\myGammaN} \frac{\| \gamma^\cN e^{0.05(t-T)} \|_{L(\myGamma,\R)^\cN} }{\| \gamma^\cN \|_{\myGammaN} }, \\
	\varrho^L_0 &= \sup_{\gamma^\cN\in\myGammaN} \frac{\| L^{\cN,\gamma^\cN} \|_{L(\R^\cN,\R^\cN)} }{\| \gamma^\cN \|_{\myGammaN} }, \text{ and}\quad
	\varrho^L_1 = \sup_{x^\cN\in\X^\cN} \frac{\| \partial_\gamma L^\cN u^\cN \|_{L(\myGamma,\R)^\cN} }{\| x^\cN \|_{\X^\cN} },
\end{align*}
which can be computed solving generalized eigenvalue problems. Since these quantities are independent of $\mu$, is suffices to determine them once in the offline phase.

The norm of the residual $\| \cG^\cN(\mu)(x_N^\ast(\mu))\|_{\Y^\cN}$ can be computed efficiently in the online phase using the separation in the parameter.
Inserting \eqref{Eq:paramsep} and $x_N(\mu) = \sum_{n=1}^N \bx_N^n(\mu) \, \xi_n$ into $\langle \cG^\cN(\mu)(x_N^\ast(\mu)), \cG^\cN(\mu)(x_N^\ast(\mu))\rangle_{\Y^\cN}$ results in a sum of products of $\mu$-independent functions which can be evaluated fast in the online phase and $\Y^\cN$-dot-products which are precomputed in the offline phase.

In Figure \ref{fig_BRR_beta_SCM_testB__konvergenz}, we compare the true error (again computed with respect to the detailed discretization) with the indicator, the error bound and the size of the residual. The values correspond to the maximum of the corresponding quantities over a test sample in $\cD$. We recall that the BRR-based error estimator is only meaningful for $\tau \le 1$, which is here the case for $N\ge 10$.
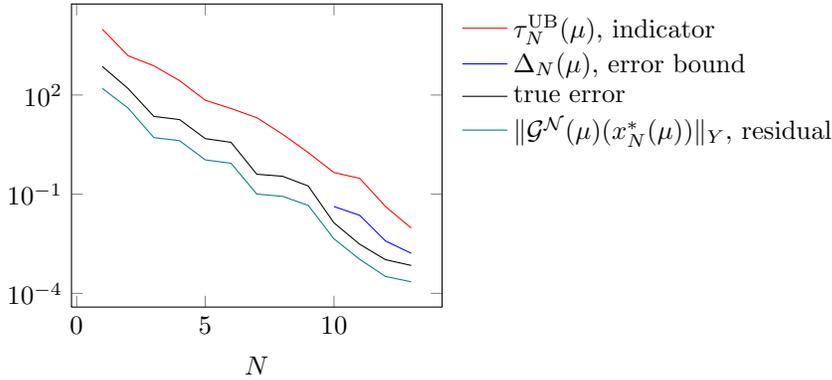
\begin{figure}[htb]
	\begin{center}
		\begin{tikzpicture}
		\begin{axis}[
			width=0.5\textwidth,
			xlabel={$N$},
			ymode=log,
			cycle list name=color list,
			unbounded coords=jump,
			legend pos=outer north east,
			legend cell align=left,
			legend style={draw=none},
			]
			\addplot table {BRR_beta_SCM_testB__konvergenz.txt};
			\addlegendentry{$\tau_N^{\textrm{UB}}(\mu)$, indicator};
			\addplot table[y=err_est] {BRR_beta_SCM_testB__konvergenz.txt};
			\addlegendentry{$\Delta_N(\mu)$, error bound};
			\addplot table[y=err] {BRR_beta_SCM_testB__konvergenz.txt};
			\addlegendentry{true error};
			\addplot[teal] table[y=res] {BRR_beta_SCM_testB__konvergenz.txt};
			\addlegendentry{$\|\cG^\cN(\mu)(x_N^\ast(\mu))\|_Y$, residual};
		\end{axis}
		\end{tikzpicture}
		\caption{Error, indicator, bound and residual over $N$.}
		\label{fig_BRR_beta_SCM_testB__konvergenz}	
	\end{center}
\end{figure}

In order to investigate the effectivity of the error bound, we fix $N=10$ and vary the parameter $\mu$ over the parameter range $\cD=[0,100]$. We can see in Figure \ref{fig_BRR_beta_SCM_testB__10_err_est} that indicator, bound and residual are numerically zero for the snapshots, where the true error of course also vanishes. The effectivity of the error bound, i.e., the ratio of error estimator and true error, is shown in Figure \ref{fig_BRR_beta_SCM_testB__10_effektivitaet}. We expect a growth with respect to increasing $\mu$ since the PDE starts becoming increasingly convection-dominated. However, the maximum size is below $8$ which seems a reasonable size to us. Of course, well-known techniques for stabilization as well as parameter-adaptivity may additionally be used, e.g.\ \cite{Welper,Eftang}.

\begin{figure}
	 \begin{subfigure}[t]{0.48\textwidth}
		\begin{tikzpicture}
		\begin{axis}[
			width=0.95\textwidth,
			xlabel={$\mu$},
			ymode=log,
			cycle list name=color list,
			unbounded coords=jump,
			legend pos=outer north east,
			legend cell align=left,
			legend style={draw=none,font=\tiny},
			clip=false,
			]
			\addplot table {BRR_beta_SCM_testB__10_err_est.txt};
			\addlegendentry{$\tau_N^{\textrm{UB}}(\mu)$};
			\addplot table[y=err_est] {BRR_beta_SCM_testB__10_err_est.txt};
			\addlegendentry{$\Delta_N(\mu)$};
			\addplot table[y=err] {BRR_beta_SCM_testB__10_err_est.txt};
			\addlegendentry{error};
			\addplot[teal] table[y=res] {BRR_beta_SCM_testB__10_err_est.txt};
			\addlegendentry{residual};
		\end{axis}
		\end{tikzpicture}
		\caption{Error, indicator, bound and residual over $\mu\in\cD$.}
			\label{fig_BRR_beta_SCM_testB__10_err_est}
	\end{subfigure}
	\hspace*{9mm}
	 \begin{subfigure}[t]{0.48\textwidth}
	 	\begin{tikzpicture}
		\begin{axis}[
			width=0.95\textwidth,
			xlabel={$\mu$},
			unbounded coords=jump,
			legend pos=outer north east,
			legend cell align=left,
			legend style={draw=none},
			clip=false,
			]
			\addplot[blue] table {BRR_beta_SCM_testB__10_effektivitaet.txt};
		\end{axis}
		\end{tikzpicture}
		 \caption{Effectivity of the error bound.}
		 \label{fig_BRR_beta_SCM_testB__10_effektivitaet}
	\end{subfigure}
	\caption{Effectivities over parameter range $\cD=[0,100]$ for $N=10$.}
\end{figure}
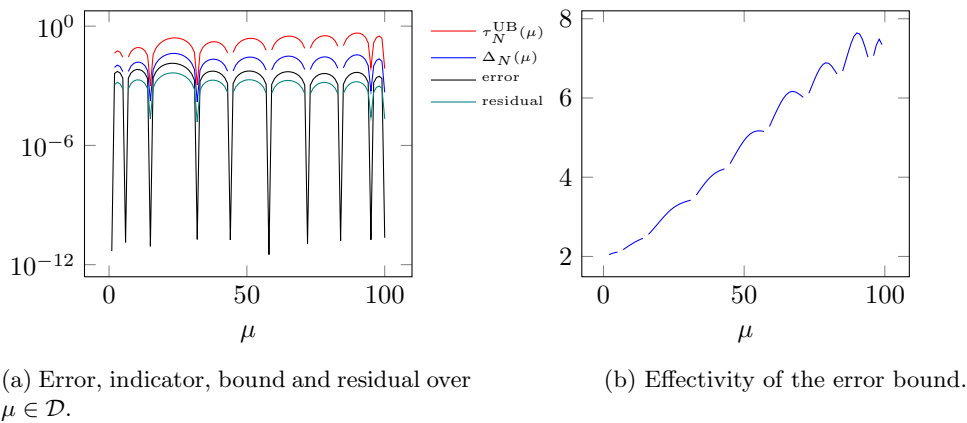

\section{Summary}

We have presented a Reduced Basis Method (RBM) for rapidly solving the parameterized Hamilton-Jacobi-Bellman (HJB) equation with the specific application to the European Union Emission Trading Scheme (EU-ETS). In particular, we have introduced a rigorous bound of the error with respect to an online-efficient error estimator. The involved parameter-dependent constants can be computed online-efficient by an anchor-point based Successive Constraint Method. Numerical experiments confirm the effectivity of the estimator.

Future research will be devoted to the specific application of the HJB-RBM to the EU-ETS in order to determine optimal regulatory strategies. From the mathematical point of view, we will consider extensions to more general settings, i.e., more general nonlinearities and stronger convection in the operator.


\bibliography{bibliography}
\bibliographystyle{siam}

\end{document}